\documentclass[12pt, reqno]{amsart}
\usepackage{amsmath}
\usepackage{amssymb}
\usepackage{epsfig}
\usepackage{graphicx}
\usepackage{color}
\definecolor{shadecolor}{gray}{0.875}
\usepackage{amscd}
\usepackage{comment}
\usepackage{adjustbox}
\usepackage{multirow}
\usepackage{tikz-cd}
\usepackage[all]{xy}

\numberwithin{equation}{section}

\usetikzlibrary{positioning}
\usepackage[colorlinks=false,urlbordercolor=white]{hyperref}
  
\tikzset{sgplattice/.style={inner sep=1pt,norm/.style={red!50!blue},char/.style={blue!50!black},
  lin/.style={black!50}},cnj/.style={black!50,yshift=-2.5pt,left=-1pt of #1,scale=0.5,fill=white}}

\DeclareFontFamily{U}{mathb}{\hyphenchar\font45}
\DeclareFontShape{U}{mathb}{m}{n}{
      <5> <6> <7> <8> <9> <10> gen * mathb
      <10.95> mathb10 <12> <14.4> <17.28> <20.74> <24.88> mathb12
      }{}
\DeclareSymbolFont{mathb}{U}{mathb}{m}{n}
\DeclareMathSymbol{\righttoleftarrow}{3}{mathb}{"FD}

\theoremstyle{plain}
\newtheorem{prop}{Proposition}[section]

\newtheorem{theo}[prop]{Theorem}
\newtheorem{coro}[prop]{Corollary}

\newtheorem{lemm}[prop]{Lemma}

\theoremstyle{definition}
\newtheorem{defi}[prop]{Definition}
\newtheorem{exam}[prop]{Example}

\newtheorem{rema}[prop]{Remark}

\newcommand{\eqto}{\stackrel{\lower1.5pt\hbox{$\scriptstyle\sim\,$}}\to}
\newcommand{\eqdashto}{\stackrel{\lower1.5pt\hbox{$\scriptstyle\sim\,$}}\dashrightarrow}

\newcommand{\actsfromright}{\righttoleftarrow}

\def\cL{{\mathcal L}}

\def\cO{{\mathcal O}}

\def\bA{{\mathbb A}}

\def\bP{{\mathbb P}}

\def\bZ{{\mathbb Z}}

\def\rH{{\mathrm H}}

\def\Br{\mathrm{Br}}
\def\codim{\mathrm{codim}}

\def\CH{\mathrm{CH}}

\def\Pic{\mathrm{Pic}}
\def\End{\mathrm{End}}

\def\Am{\mathrm{Am}}

\def\lim{\mathrm{lim}}

\def\IJ{\mathrm{IJ}}

\def\Spec{\mathrm{Spec}}

\author{Andrew Kresch}
\address{
  Institut f\"ur Mathematik,
  Universit\"at Z\"urich,
  Winterthurerstrasse 190,
  CH-8057 Z\"urich, Switzerland
}
\email{andrew.kresch@math.uzh.ch}

\author{Yuri Tschinkel}
\address{Courant Institute\\
                New York University \\
                New York, NY 10012 \\
                USA }
\address{Simons Foundation\\
                 160 Fifth Av.\\ 
                 New York, NY 10010}                
\email{tschinkel@cims.nyu.edu}

\title[Equivariant birational geometry]{Linearizability notions in equivariant birational geometry}

\begin{document}

\date{June 8, 2026}

\begin{abstract}
We discuss birational properties of actions of finite groups on algebraic varieties, linearizability, torsors, and versality. 
\end{abstract}

\maketitle

\section{Introduction}
\label{sect:intro}

In this note, we continue our study of equivariant birational geometry of regular actions of finite groups on smooth projective varieties
over an algebraically closed field of characteristic zero. We amplify and expand the parallels between equivariant geometry and geometry over nonclosed fields, extending constructions from \cite{DR} to the setting of equivariant Chow groups, and equivariant decomposition of the diagonal. One of the key observations in \cite{DR} was that the twisting construction preserves some birational properties of $G$-actions, e.g., a $G$-variety is equivariantly stably linearizable if and only if all of its twists are stably rational.

Among our main results are:
\begin{itemize}
\item In Theorem~\ref{thm.onetorsorP} we show that many properties of arbitrary twists of $G$-varieties via torsors (such as existence of rational points, unirationality, stable rationality, rationality) are implied by the same properties for a \emph{single} twist, arising from a $G$-variety with generically free versal $G$-action.  
\item In Theorem~\ref{thm.onetorsorXY} we show that birationality of twists of $G$-varieties 
for all torsors is equivalent to birationality of twists arising from a single generically free versal $G$-action.   
\item In Theorem~\ref{thm.idodtwist} we prove that $G$-equivariant integral decomposition of the diagonal, introduced in Section~\ref{sect:obstr}, is equivalent to integral decomposition of the diagonal for a twist arising from a single generically free versal actions.  
\end{itemize}
Earlier developments have exhibited properties of arbitrary twists as implied by the same for a single twist, associated with a faithful $G$-representation. 
As linearizable actions are versal, but not all versal actions are linearizable,
for these properties our statements represent a strengthening.

In Section~\ref{sect:gen} we discuss basic notions in $G$-equivariant birational geometry and their relation to birational geometry over nonclosed fields, via the twisting construction 
through torsors. In Section~\ref{sec.coarse} we introduce the notions of {\em coarse $G$-birationality}, which sits between $G$-birationality and stable $G$-birationality, and \emph{coarse linearizability}. In Section~\ref{sect:obstr} we define the notion of $G$-equivariant decomposition of the diagonal and establish its key properties, e.g., connection with 
stable equivariant linearizability, in Theorem~\ref{theo:stablelinearizable},  and specialization, in Theorem~\ref{theo:specialization}. Finally, in Section~\ref{sect:coho}, we  
consider the behavior of cohomological obstructions under twisting, and the related notion of {\em negligible cohomology}.   

\medskip
\noindent
\textbf{Acknowledgments:}
The authors thank Brendan Hassett and Sho Tanimoto for discussions and collaboration on related problems.
The second author was partially supported by NSF grant 2301983.

\section{Basic notions of equivariant birational geometry}
\label{sect:gen}
We work over an algebraically closed field $k$ of characteristic zero.

\subsection*{Varieties over extension fields}
For an extension field $K$ of $k$,
a variety over $K$ is a scheme of finite type over $K$ that is separated, reduced and irreducible.
A proper birational map $X\dashrightarrow Y$ of varieties over $K$ is one that can be factored into proper birational morphisms and their formal inverses.
Existence of a proper birational map over $K$ defines an equivalence relation, for which we employ the notation
\[ X\sim Y. \]
For the stable version, with $X$ replaced by $X\times \bP^m$ and $Y$ by $Y\times \bP^n$, for some $m$ and $n$, we employ the notation
\[ X\sim^s Y. \]
%Two varieties $X$ and $Y$ over $K$ are birationally equivalent if $K(X)\cong K(Y)$, and stably birationally equivalent if $X\times \bP^m$ and $Y\times\bP^n$ are birationally equivalent for some $m$ and $n$. 
We recall well-studied conditions for a variety $X$ over $K$ of dimension $d$ (understood for general $X$ as applied to a proper model of $X$):
\begin{itemize}
\item {\em rationality}: 
$X\sim\bP^d$, 
\item {\em stable rationality}: $X\sim^s\bP^d$.
\end{itemize}

\subsection*{$G$-varieties}
Let $G$ be a finite group.
A $G$-variety (over $k$) is a separated, reduced, finite-type $k$-scheme $X$ with regular $G$-action, such that the $G$-action is transitive on the set of irreducible components of $X$.
We call $X$ an irreducible $G$-variety if, non-equivariantly, $X$ is irreducible, i.e., a variety.
Given a $G$-variety $X$, the $G$-action is generically free if it is free on an invariant dense open subvariety of $X$.
By convention, the actions are right $G$-actions.

We recall some further notions; see, e.g., \cite[\S 3.1]{KTsurvey} for more details.
A $G$-rational map is a $G$-equivariant rational map between $G$-varieties.
A $G$-birational map is a $G$-rational map that is birational.
A proper $G$-birational map is one that factors as a composition of proper birational $G$-equivariant morphisms and their formal inverses.

Among $G$-varieties we have the relation, to be \emph{$G$-birational}:
\[ X\sim_GY\quad\Leftrightarrow\quad
\exists\text{ proper $G$-birational map $X\dashrightarrow Y$}. \]
We call $X$ and $Y$ \emph{stably $G$-birational} if they become $G$-birational after passing to products with (nonempty) projectivized representations:
\[ X\sim_G^sY\quad \Leftrightarrow\quad
\exists\text{ $G$-representations $U$, $V$}\colon X\times \bP(U)\sim_G Y\times \bP(V). \]
We also recall birationally invariant conditions on a $G$-variety $X$:
\begin{itemize}
\item \textbf{(L)} \emph{linearizability}: $X\sim_G\bP(V)$ for some representation $V$.
\item \textbf{(SL)} \emph{stable linearizability} $X\times \bP(U)\sim_G\bP(V)$ for some representations $U$ and $V$.
\item \textbf{(U)} \emph{$G$-unirationality} if there is a dominant $G$-rational map $\bP(V)\dashrightarrow X$ for some representation $V$.
\end{itemize}
(These conditions apply as stated when $X$ is proper and are understood by convention for more general $X$ when they are satisfied by an equivariant proper model; cf.\ \cite[\S 3.1]{KTsurvey}.)

\subsection*{Torsors}
Further notions are defined in terms of torsors, by which we mean torsors under $G$.
Without further mention, the base of the torsor will be $\Spec(K)$, where $K$ is an extension field of $k$.
The total space will be $T=\Spec(L)$ for an \'etale $K$-algebra $L$.
The next condition on a $G$-variety $X$ is defined by requiring a condition to hold for all torsors, over all extension fields $K$ of $k$ (expressed simply as $\forall\,T\colon\dots$).
\begin{itemize}
\item \textbf{(WV)} $X$ is \emph{weakly versal} if $\forall\,T$: $\exists$ $G$-equivariant $\Spec(L)\to X$.
\item \textbf{(V)} $X$ is \emph{versal} if every invariant dense open subvariety of $X$ is weakly versal.
\end{itemize}

Let $Z$ be a quasi-projective $G$-variety over $k$ with a generically free action of $G$ and $Z\to Z/G$ the quotient.
The function field $k(Z/G)$ is the field of $G$-invariants $k(Z)^G$.
We get an associated torsor
\[ T_Z:=k(Z)/k(Z)^G, \]
corresponding to the Galois field extension,
with Galois group $G$.
An important special case $T_V$ arises from a faithful $G$-representation $V$ and the quotient $V\to V/G$.

\subsection*{Twisting}
Given a torsor $T=\Spec(L)$ over $K$, we obtain a free $G$-action on the $G$-variety $$
X_L=X\times_{\Spec(k)}\Spec(L)
$$ 
over $K$.
The quotient variety for this free $G$-action, provided it exists, is the \emph{twist} ${}^TX$,
a variety over $K$ with the property
$({}^TX)_L\cong X_L$.
The quotient variety is guaranteed to exist when $X$ is quasi-projective.
In the sequel we will apply the twisting construction only to quasi-projective varieties.

When applied to a torsor $T_Z$ arising from a generically free 
$G$-action on $Z$ as above, we get the twist ${}^{T_Z}X$ over $K=k(Z)^G$.

\begin{exam}
\label{exa.Vtwist}
For a $G$-representation $V$ of dimension $n\ge 1$, we have
${}^TV\cong \bA^n$ \cite[Lemma 3.1]{DR} and
${}^T\bP(V)\cong \bP^{n-1}$, for all torsors $T$.
\end{exam}

\begin{rema}
    \label{rema:twist-stab}
The twisting construction respects $G$-invariant loci $U\subset X$, i.e.,
${}^TU\subset {}^TX$, for all $G$-torsors $T$. In particular, it preserves the stabilizer stratification on $X$, see \cite[Sect.\ 6.1]{KTsurvey}.
\end{rema}

\subsection*{Versality and twisting}

Duncan and Reichstein give the following characterizations.

\begin{prop}[{\cite{DR}}]
\label{prop.DR}
Let $X$ be a quasi-projective $G$-variety.
\begin{itemize}
\item[(i)] $X$ is weakly versal $\Leftrightarrow\,\forall\,T\colon{}^TX(K)\ne \emptyset$.
\item[(ii)] $X$ is versal $\Leftrightarrow\,\forall\,T\colon{}^TX(K)$ is dense in ${}^TX$.
\item[(iii)] $X$ is $G$-unirational $\Leftrightarrow\,\forall\,T\colon{}^TX$ is unirational.
\item[(iv)] $X$ is stably linearizable $\Leftrightarrow\,\forall\,T\colon{}^TX$ is stably rational.
\end{itemize}
\end{prop}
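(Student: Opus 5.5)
The engine of the whole proof is one dictionary between equivariant data over $X$ and rational data on the twist. Fix a torsor $T=\Spec(L)\to\Spec(K)$. I would first establish
\[ \{G\text{-equivariant morphisms }\Spec(L)\to X\}\;\longleftrightarrow\;{}^TX(K), \]
and similarly, for a $K$-variety $W$, a correspondence between $G$-equivariant rational maps $W_L\dashrightarrow X$ and $K$-rational maps $W\dashrightarrow {}^TX$. To prove it, take a $G$-equivariant $\phi\colon\Spec(L)\to X$. Its graph gives a $G$-equivariant section $\Spec(L)\to X_L=X\times\Spec(L)$ of the projection to $\Spec(L)$, with $G$ acting diagonally. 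Passing to quotients by the free $G$-action gives a section $\Spec(K)\to{}^TX$. Conversely, a $K$-point pulls back along $X_L\to{}^TX$ to a $G$-stable closed subscheme mapping isomorphically to $\Spec(L)$, which composes with $X_L\to X$. That these constructions are mutually inverse is descent along the finite étale cover $\Spec(L)\to\Spec(K)$. Quasi-projectivity is used to guarantee the quotients exist.

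\textbf{(i) and (ii).} Part (i) is immediate from the dictionary. For (ii), apply (i) to every invariant dense open $U\subset X$, using Remark~\ref{rema:twist-stab} that ${}^TU\subset{}^TX$ is open. Any nonempty open subset of ${}^TX$ pulls back to a $G$-invariant open in $X_L$ over $K$. The issue is that such a subset is not of the form ${}^TU$ with $U$ defined over $k$. To get around this, base change: replace $k$ by $K$ and $X$ by $X_K$. Then an open $W\subset{}^TX$ corresponds to an invariant open $U\subset X_K$, and $K$-points of $W$ correspond to equivariant $\Spec(L)\to U$. So the definition of versality must be checked to be stable under extension of the base field. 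I would prove this by spreading out: $U$ is defined over a finitely generated $k$-algebra $A$, and a torsor over an extension of $\mathrm{Frac}(A)$ is again a torsor over an extension of $k$. Density of ${}^TX(K)$ is then exactly nonemptiness on every such open.

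\textbf{(iii) and (iv).} For the implications from the $G$-side to the twists: by Example~\ref{exa.Vtwist}, ${}^T\bP(V)\cong\bP^{n-1}$. Twisting is functorial for $G$-rational maps and for proper $G$-birational maps, and it commutes with products. Hence a dominant $\bP(V)\dashrightarrow X$ twists to a dominant $\bP^{n-1}\dashrightarrow{}^TX$, and $X\times\bP(U)\sim_G\bP(V)$ twists to ${}^TX\times\bP^m\sim\bP^{n-1}$.

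For the converse directions, I take the generic torsor $T_V$ for a faithful representation $V$, with $K=k(V)^G$ and $L=k(V)$. In (iii), a dominant rational map $\bP^N_K\dashrightarrow{}^{T_V}X$ gives, by the dictionary applied to $W=\bP^N_K$, a $G$-equivariant rational map $\bP^N_L\dashrightarrow X$. Since $L=k(V)$, this spreads out to a dominant $G$-rational map $V\times\bA^N\dashrightarrow X$, with trivial action on $\bA^N$. The source is birational to a projectivized representation $\bP(V\oplus k^{N+1})$.

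In (iv), a birational map ${}^{T_V}X\times\bP^m\sim\bP^n$ over $K$ spreads out to $G$-birationality $X\times V\times\bA^m\sim_G V\times\bA^n$. By the no-name lemma, this gives $X\times\bP(U)\sim_G\bP(V')$.

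\textbf{Main obstacle.} I expect the hardest step to be the spreading-out in (iv). Generic-fiber birationality over $k(V)^G$ must become a proper $G$-birational map of total spaces, and the vector-bundle factors must be absorbed. For this I would invoke the no-name lemma for $G$-vector bundles over a base with generically free action, and convert affine factors to projectivized representations via the birationality $V\sim_G\bP(V\oplus k)$.
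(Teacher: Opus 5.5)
\textbf{Verdict: genuine gap in (ii), direction $\Rightarrow$; the rest is sound.} Your treatment of (i), of the direction $\Leftarrow$ in (ii), and of (iii) and (iv) follows the standard route of \cite{DR}. The paper gives no proof of this statement and only cites \cite{DR}. Its proofs of Theorem~\ref{thm.onetorsorP} and Proposition~\ref{prop.implicationsXY} use the same dictionary and the same spreading out over a generic torsor. In (iv) you do not need the No-Name Lemma. Indeed, $X\times V\times\bA^m=X\times(V\oplus k^m)$ is open in $X\times\bP(V\oplus k^{m+1})$, and a $G$-birational map between projective varieties is automatically proper.

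\textbf{The reduction to $X_K$.} A dense open $W\subset{}^TX$ is a $G$-invariant open of $X_K\times_K\Spec(L)$ for the diagonal action. It need not come from an open of $X_K$. What is true is $W\supset{}^TU$, where $U\subset X_K$ is the complement of the image of the complement of $W$. This is easily repaired.

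\textbf{The actual gap.} The problem is your argument that versality survives extension of the base field. After spreading out, $U_A\subset X\times\Spec(A)$ is still not an open of $X$, and versality of $X$ only concerns invariant opens defined over $k$. Grant that $X\times\Spec(A)$ is versal over $k$; this is true, using $k$-points of $\Spec(A)$. Weak versality of $U_A$ then gives an equivariant $\Spec(L)\to U_A$. But its $\Spec(A)$-coordinate is some uncontrolled map $\Spec(K')\to\Spec(A)$, not the prescribed composite $\Spec(K')\to\Spec(K)\to\Spec(A)$. The remark that a torsor over an extension of $\mathrm{Frac}(A)$ is a torsor over an extension of $k$ is true, but it does not address this. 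For a single fixed torsor, knowing that ${}^TX(K)$ meets every ${}^TU_0$ with $U_0\subset X$ defined over $k$ does not prevent ${}^TX(K)$ from lying on a $K$-subvariety that dominates $X$. You have to use other torsors.

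\textbf{One way to close it.} Fix a generically free $V$ with free locus $V'$. First show that ${}^{T_V}X$ has dense $k(V)^G$-points. Equivalently, for every invariant dense open $W'\subset X\times V'$, the graph of some $G$-rational map $V\dashrightarrow X$ meets $W'$.
\begin{enumerate}
\item The product $X\times V'$ is versal. For any torsor, take a rational point of ${}^TU_0$ with $U_0=\mathrm{pr}_1(W')$. Lift it into its fiber, which is a nonempty open subset of the twist of $V'$, an open in affine space over an infinite field.
\item Hence $T_V$ gives a $G$-rational map $(f,h)\colon V\dashrightarrow W'$.
\item For general $v$, choose an equivariant polynomial map $q\colon V\to V$ with $q(h(v))=v$. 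Interpolate on the free orbit of $h(v)$ and average over $G$.
\item Then $f\circ q$ is $G$-rational, and its graph passes through $(f(v),h(v))\in W'$.
\item For the given $T\colon L/K$, density persists after base change to $K(V)^G$. So each invariant dense open $U\subset X_K$ receives a $G$-rational map from an invariant dense open $V''\subset V_K$.
\item Since ${}^TV''\subset\bA^n_K$ has $K$-points, this produces a $K$-point in ${}^TU\subset W$.
\end{enumerate}
The last two steps are exactly the mechanism in the density part of the paper's proof of Theorem~\ref{thm.onetorsorP}.
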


For smooth projective $G$-varieties, these conditions are stably birationally invariant.
Generally, we have the implications \cite[(1.1)]{DR}:
\[ \textbf{(SL)}\quad\Rightarrow\quad \textbf{(U)}\quad\Rightarrow\quad \textbf{(V)}\quad\Rightarrow\quad \textbf{(WV)}. \]
(In \cite{DR}, group actions satisfying \textbf{(U)} are called \emph{very versal}.) 
There is also the implication
\[ \textbf{(WV)}\quad\Rightarrow\quad \textbf{(A)} \]
\cite[Rmk.\ 2.7]{DR},
where Condition \textbf{(A)} is the existence of fixed points for all abelian subgroups of $G$.

\begin{exam}
\label{exa.WVV}
All of these implications are strict.
For the first two, we see this already when
$G$ is trivial \cite[Sect.\ 1]{DR}.
For $G=\bZ/2\bZ$, the action on an elliptic curve $E$ by $-1$ has fixed points, thus is weakly versal \cite[Prop.\ 2.2]{DR}, but is not versal:
for $T\colon k(t)/k(t^2)$, the only rational points on the twist ${}^TE$ are the ones coming from the fixed points.
Actions satisfying condition \textbf{(A)}, but not \textbf{(WV)}, may be found among toric varieties and del Pezzo surfaces \cite{KT-uni}, \cite{STZ}.
\end{exam}

In the proof of their characterizations by means of torsors, Duncan and Reichstein make the observation that a property for all torsors is implied by the property for a single torsor, coming from a faithful $G$-representation.
Such observations had also appeared earlier, e.g., as recalled in \cite[Prop.\ 2.1]{GM}.
Our next result presents this observation uniformly for all of the relevant properties and in the strengthened form, that the single torsor can be given by any generically free \emph{versal} group action.

\begin{theo}
\label{thm.onetorsorP}
Let $X$ be a quasi-projective $G$-variety over $k$, and
$\mathbf{P}$ one of the following properties of a variety over an extension field of $k$: 
\begin{itemize}
    \item possession of a rational point,
\item possession of a Zariski dense set of rational points,
\item unirationality,
\item stable rationality,
\item rationality.
\end{itemize}
The following are equivalent.
\begin{itemize}
\item[(i)] For every torsor $T$, the twist
${}^TX$ satisfies property $\mathbf{P}$.
\item[(ii)] For some quasi-projective $G$-variety $Z$ over $k$ with generically free versal $G$-action and 
$T_Z=k(Z)/k(Z)^G$, 
the twist ${}^{T_Z}X$ 
%${}^{k(Z)/k(Z)^G}X$ 
satisfies property $\mathbf{P}$.
\end{itemize}
\end{theo}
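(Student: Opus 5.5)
(i)$\Rightarrow$(ii) is immediate, since $T_Z$ is itself a torsor over $K=k(Z)^G$. For (ii)$\Rightarrow$(i), I will prove the following specialization statement: given a torsor $T=\Spec(L)$ over $K'$, the property of ${}^{T_Z}X$ over $k(Z)^G$ transfers to ${}^TX$ over $K'$. The strategy is to spread ${}^{T_Z}X$ out over an open subset of $Z/G$ and then pull back along a $K'$-point of that open set supplied by versality.

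\textbf{Spreading out.} Since $\mathbf{P}$ is witnessed by finitely many algebraic data, I first choose a $G$-invariant dense open $Z^\circ\subset Z$ with the following properties. The action on $Z^\circ$ is free, so $Z^\circ\to B:=Z^\circ/G$ is a $G$-torsor. The twist ${}^{Z^\circ}X:=(X\times Z^\circ)/G\to B$ exists by quasi-projectivity, and its generic fiber is ${}^{T_Z}X$. The data witnessing $\mathbf{P}$ extend over $B$. Concretely, the witnesses are:
\begin{itemize}
\item a rational point, which becomes a section $B\to{}^{Z^\circ}X$;
\item a dominant rational map $\bA^N_{k(B)}\dashrightarrow{}^{T_Z}X$, which becomes a $B$-rational map, dominant on fibers over a smaller open, after shrinking;
\item a birational map ${}^{T_Z}X\times\bP^m\dashrightarrow\bP^{d+m}$ with its inverse, which become mutually inverse $B$-rational maps defined on fiberwise dense opens, so that they restrict to birational maps on every fiber.
\end{itemize}

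\textbf{Pulling back.} By versality of $Z$, the invariant dense open $Z^\circ$ is weakly versal. Hence there is a $G$-equivariant morphism $\Spec(L)\to Z^\circ$, which descends to a point $b\in B(K')$ with $T\cong \Spec(L)\to b$ the pullback of the torsor $Z^\circ\to B$. Twisting is compatible with base change, so the fiber of ${}^{Z^\circ}X$ over $b$ is ${}^TX$. The spread-out data then specialize to:
\begin{itemize}
\item a $K'$-point of ${}^TX$;
\item a rational map $\bA^N\dashrightarrow{}^TX$;
\item a birational map between ${}^TX\times\bP^m$ and $\bP^{d+m}$ over $K'$.
\end{itemize}

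\textbf{Main obstacle.} The point $b$ need not lie in any prescribed proper closed subset of $B$'s complement, but it could still lie in the bad locus where the specialized maps are undefined, fail to be dominant, or fail to be birational. Versality alone does not let me choose $b$ in an arbitrary dense open of $B$ whose complement we did not anticipate. It does, however, let me shrink $Z^\circ$ first: every invariant dense open of $Z$ is weakly versal. So I carry out all the shrinking before invoking versality, making the bad locus empty on $B$. This handles rationality, stable rationality, and unirationality, where a dominant map to a variety defined on a fiberwise dense open specializes well.

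For the dense-rational-points property, I instead use the unirationality-free argument of Proposition~\ref{prop.DR}(ii). For each nonempty open $W\subset{}^TX$, I need a $K'$-point in $W$. I will handle this by reducing to weak versality of $X\times Z$-type constructions, or more directly as follows. Apply the rational-point case to the invariant open $U\subset X$ with ${}^TU\supset W$. When $W$ is not of this form, I first pass to the field of definition $L$, and use that the density statement over the generic point $b$ implies density over its specializations by shrinking $B$ for the finitely many components involved. I expect this last reduction to need the most care.
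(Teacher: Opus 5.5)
Your arguments for rational points, unirationality, stable rationality and rationality are correct and follow the paper's mechanism. You spread the witness out over an invariant dense open of $Z$, do all the shrinking first, and only then use versality to map $\Spec(L)$ into the good open and base change. The paper works upstairs with $G$-rational maps over $Z$ rather than over $B=Z^\circ/G$, and routes rationality through Theorem~\ref{thm.onetorsorXY}, but this is the same argument.

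The gap is the case of a Zariski dense set of rational points. The proper closed subset $C\subset{}^TX$ you must avoid is defined over $K'$ and depends on $T$. In general it is not contained in ${}^TC_0$ for any invariant proper closed $C_0\subset X$ defined over $k$; for instance, $C$ may contain a point lying over the generic point of $X/G$. So $C$ cannot be spread out over the $k$-variety $B$ and removed by shrinking before $b$ is chosen, and that is exactly the mechanism your ``main obstacle'' paragraph relies on. Also, you need ${}^TU\subset W$, not ${}^TU\supset W$. The principle you fall back on, that density on the generic fiber passes to specializations after shrinking $B$, is false. A dense set of points is infinite, each point extends only over its own open subset of $B$, and there is no common open to shrink to.

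There are two ways to close this.

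\emph{The paper's route.} Density is stable under field extension, so ${}^{K'(Z)/K'(Z)^G}X$ still has a dense set of rational points. Every proper closed $C\subset{}^TX$ lies in ${}^TC_0$ for an invariant closed $C_0\subsetneq X_{K'}$: take the image of the preimage of $C$ under the finite equivariant projection $X_L\to X_{K'}$. The set to avoid is then constant data over the new base. The rational-point argument runs with a $G$-rational map $Z_{K'}\dashrightarrow X_{K'}\setminus C_0$ defined on an invariant dense open $U\subset Z_{K'}$. Since $U$ is only defined over $K'$, the equivariant $\Spec(L)\to U$ again comes from versality of $Z$, in the form of Proposition~\ref{prop.DR}(ii).

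\emph{A shorter route}, closer to what you seem to intend in citing Proposition~\ref{prop.DR}(ii). For every invariant dense open $X_0\subset X$, the twist ${}^{T_Z}X_0$ is a nonempty open of ${}^{T_Z}X$, so it has a $k(Z)^G$-point. Your rational-point case then shows $X_0$ is weakly versal. Hence $X$ is versal, and Proposition~\ref{prop.DR}(ii) gives density of rational points on every twist ${}^TX$.
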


\begin{proof}
The implication (i) $\Rightarrow$ (ii) is trivial.
We suppose (ii), and let
$K$ be an extension field of $k$ and
$T\colon L/K$ a $G$-torsor, where
$L$ is an \'etale $K$-algebra.
We need to establish property $\mathbf{P}$
for the twist ${}^TX$.
We first treat the case when $\mathbf{P}$ is ``possession of a rational point'', then we explain the modifications to the argument for
the other properties $\mathbf{P}$.
If ${}^{T_Z}X$ has a rational point, then there is a $G$-rational map $Z\dashrightarrow X$, say, defined on
invariant dense open $U\subset Z$.
Since the $G$-action on $Z$ is versal, there is a $G$-equivariant morphism $\Spec(L)\to U$.
Composing, we get equivariant $\Spec(L)\to X$, thus a $K$-point of ${}^TX$.

Generally, density of a given set of points of a finite-type scheme over a field is stable under field extension (reduce to the case of a finitely generated field extension and treat the finite and purely transcendental cases separately).
So, if ${}^{T_Z}X$ has a dense set of $k(Z)^G$-points, then
${}^{K(Z)/K(Z)^G}X$ has a dense set of $K(Z)^G$-points.
For invariant dense open $W\subset X_K$ we argue as above with $Z_K\dashrightarrow W$, defined on invariant dense open $U\subset Z_K$, and get a $K$-point of ${}^TX$, for which the corresponding $G$-equivariant morphism $\Spec(L)\to X_K$ has image in $W$.

If ${}^{T_Z}X$ is unirational, then for some $N$ there exists a dominant $G$-rational map $\bP^N\times Z\dashrightarrow X\times Z$ (with trivial action on $\bP^N$), compatible with the projection morphisms to $Z$.
We choose an invariant dense open subset where this is a morphism and an invariant dense open subset $W$ of the image and let $U\subset Z$ denote the image of $W\subset X\times Z$ under projection.
As before, by versality, there is equivariant $\Spec(L)\to U$, and by base change, dominant equivariant $\bP^N_L\dashrightarrow X_L$, hence dominant $\bP^N_K\dashrightarrow {}^TX$.

For the remaining properties, we get the result as a special case of the next theorem.
\end{proof}

\begin{theo}
\label{thm.onetorsorXY}
Let $X$ and $Y$ be a quasi-projective $G$-varieties over $k$.
The following are equivalent.
\begin{itemize}
\item[(i)] For every torsor $T$, we have
${}^TX\sim {}^TY$.
\item[(ii)] For some quasi-projective $G$-variety $Z$ over $k$ with generically free versal $G$-action, we have ${}^{T_Z}X\sim {}^{T_Z}Y$.
\end{itemize}
\end{theo}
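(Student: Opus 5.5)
The implication (i) $\Rightarrow$ (ii) is trivial. For the converse I will follow the pattern of the proof of Theorem~\ref{thm.onetorsorP}: spread the birational map over an invariant open subset of $Z$, then specialize to an arbitrary torsor using versality. So suppose ${}^{T_Z}X\sim{}^{T_Z}Y$ over $k(Z)^G$. Fix an extension $K/k$ and a torsor $T\colon L/K$.

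\textbf{Step 1: spread out over $Z$.} Pulling back along $\Spec(k(Z))\to\Spec(k(Z)^G)$, the proper birational map becomes a $G$-equivariant one between the generic fibers of $X\times Z\to Z$ and $Y\times Z\to Z$, with $G$ acting diagonally. Concretely, it is a chain of proper birational morphisms and their formal inverses
\[ {}^{T_Z}X=W_0 \leftarrow W_1\to W_2\leftarrow\cdots\to W_r={}^{T_Z}Y, \]
with each $W_i$ a variety over $k(Z)^G$. We may take each $W_i$ quasi-projective, or otherwise work with algebraic spaces. Base change to $k(Z)$ gives varieties with $G$-action and equivariant proper birational morphisms. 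By standard spreading out, there is an invariant dense open $U\subset Z$ over which all the $W_i$ extend to $G$-varieties $\mathcal W_i$ flat and of finite type over $U$, with the following properties:
\begin{itemize}
\item $\mathcal W_0=X\times U$ and $\mathcal W_r=Y\times U$;
\item the morphisms extend to equivariant proper $U$-morphisms;
\item each of these morphisms is an isomorphism over the complement of a closed subset $\mathcal B_i$ that is fiberwise nowhere dense.
\end{itemize}
Spreading out is done over $U/G$, and invariance is obtained by taking intersections of translates. Shrinking $U$ further, I also arrange that the fibers of every $\mathcal W_i$ over points of $U$ are geometrically integral. This is possible because the generic fibers are geometrically integral, being varieties whose base change to $k(Z)$ is a twist of $X$ or of $Y$, up to birational modification.

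\textbf{Step 2: specialize using versality.} Since $Z$ is versal, $U$ is weakly versal, so there is a $G$-equivariant $\Spec(L)\to U$. Base change of the chain along this morphism gives $G$-equivariant proper morphisms between $X_L$, $(\mathcal W_i)_L$ and $Y_L$. These are birational on each component, by the fiberwise isomorphism away from $\mathcal B_i$ and the integrality of the fibers. Since $G$ acts freely on $\Spec(L)$, descent to $K$ yields a chain of proper birational morphisms of varieties over $K$ between ${}^TX$ and ${}^TY$, so ${}^TX\sim{}^TY$.

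\textbf{Main obstacle.} The main difficulty is the bookkeeping in Step 1. The open $U$ must be chosen invariant, and after specialization properness, birationality and irreducibility of the intermediate fibers must all be preserved, even though $\Spec(L)$ need not be connected. I will handle this by performing all constructions on the quotient $U/G$ via the twisted objects over $k(Z)^G$. Over an open subset of $U/G$ the relevant properties hold fiberwise, including geometric integrality and the isomorphism locus being dense. The map $\Spec(K)\to U/G$ induced from $\Spec(L)\to U$ then lands in that open subset, and the chain over $K$ is simply the pullback of the descended chain.

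Finally, stable rationality and rationality in Theorem~\ref{thm.onetorsorP} follow from this theorem. Take $Y=\bP(V)$ for a suitable representation, or replace $X$ by $X\times\bP(U)$, and use Example~\ref{exa.Vtwist}, which gives ${}^T\bP(V)\cong\bP^{n-1}$ for every torsor $T$.
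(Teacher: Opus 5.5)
Your proposal is correct and follows the paper's argument: spread the birational equivalence out over an invariant dense open $U\subset Z$, obtain an equivariant $\Spec(L)\to U$ from versality, and base change. The differences are only technical. The paper collapses the chain into a single closure-of-graph roof of projective birational morphisms over $X\times Z$ and $Y\times Z$, so quotients exist by quasi-projectivity. You keep the whole chain and avoid quotient-existence issues by spreading out the twisted objects over $U/G$, and you also make explicit the geometric integrality of fibers, which the paper leaves implicit.
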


\begin{proof}
As before, (i) $\Rightarrow$ (ii) is trivial, so we suppose (ii) and consider a $G$-torsor $T\colon L/K$.
By a closure-of-graph construction we have, after replacing $Z$ by a suitable invariant dense open subvariety, a diagram of equivariant  projective birational morphisms
\[
\xymatrix{
&V \ar[dl] \ar[dr] \\
X\times Z && Y\times Z
}
\]
compatible with projection morphisms to $Z$.
We let $W\subset V$ be an invariant dense open subset, such that the morphisms in the diagram restrict to isomorphisms of $V$ with the respective images.
Let $U\subset Z$ to be the common image under the further projection morphisms.
We conclude as in the proof of Theorem \ref{thm.onetorsorP} with an equivariant morphism $\Spec(L)\to U$ and base change.
\end{proof}

\section{Coarse $G$-birationality and coarse linearizability}
\label{sec.coarse}

Theorem \ref{thm.onetorsorXY} motivates the following definition.

\begin{defi}
\label{def.coarsebirationality}
Two quasi-projective $G$-varieties $X$ and $Y$ are said to be \emph{coarsely $G$-birational}, written
\[ X\sim_G^c Y \]
if the equivalent conditions of Theorem \ref{thm.onetorsorXY} hold for $X$ and $Y$.
\end{defi}

\begin{exam}
\label{exa.cbrep}
We have $\bP(V)\sim_G^c\bP^{n-1}$ for any $G$-representation $V$ of dimension $n\ge 1$ (cf.\ Example \ref{exa.Vtwist}).
\end{exam}

We recall that the case $Y=\bP^d$ of Theorem \ref{thm.onetorsorXY}, where $d$ denotes the dimension of $X$, gives
Theorem \ref{thm.onetorsorP} for the property rationality.

\begin{defi}
\label{def.CL}
We say that a projective $G$-variety $X$ is \emph{coarsely linearizable}, or satisfies Condition \textbf{(CL)}, if $X$ satisfies the equivalent conditions of Theorem \ref{thm.onetorsorP} for the property rationality.
Generally, a $G$-variety considered to satisfy Condition \textbf{(CL)}, when \textbf{(CL)} holds for an equivariant projective model.
\end{defi}

In other words, for a projective $G$-variety $X$ of dimension $d$:
\[ \text{$X$ is coarsely linearizable}\quad\Leftrightarrow\quad X\sim_G^c \bP^d. \]

\begin{exam}
\label{exa.dimension}
The obstruction to linearizability, based on dimension of $G$-representations \cite[\S 4.1]{KTsurvey}, yields examples of actions that are coarsely linearizable but not linearizable.
Letting the symmetric group $\mathfrak{S}_3$ act faithfully on $\bP^1$, the product action of $\mathfrak{S}_3\times \mathfrak{S}_3$ on $\bP^1\times \bP^1$ is coarsely linearizable but not linearizable: the smallest faithful representation of $\mathfrak{S}_3\times \mathfrak{S}_3$ has dimension $4$.
For additional examples, notice that
\textbf{(CL)} $\Leftrightarrow$ \textbf{(SL)}
for any nonsingular invariant quadric $X\subset \bP(V)$.
So a stably linearizable dimension-obstructed quadric, as given in \cite[\S 4.1]{KTsurvey}, is coarsely linearizable but not linearizable.
\end{exam}

\begin{exam}
\label{exa.alltwistsrational}
Some varieties are known to have the property that all twists are rational, or at least all twists with a rational point are rational.
Examples include projective spaces, quadric hypersurfaces, 
del Pezzo surfaces of degree $\ge 5$, 
and quintic del Pezzo threefolds and prime Fano threefolds of genus 7 and 12 \cite[Thm.\ 1.1]{Kuznetsov-Prokhorov}.
On the other hand, methods such as universal torsors
or vector bundle techniques
give the stable linearizability of some nonlinearizable actions.
Combining, we get additional examples of actions that are coarsely linearizable but not linearizable.
These include a sextic del Pezzo surface with $\mathfrak{S}_4$-action \cite[Sect.\ 5.2]{HT23},
$\overline{\mathcal{M}}_{0,2n+1}$ for many subgroups of $\mathfrak{S}_{2n+1}$ \cite{HTZ},  and
some actions on quadrics \cite[Prop.\ 12]{BBT}.
\end{exam}

\begin{rema}
\label{rem.LCLSL}
Condition \textbf{(CL)} fits into a chain of implications
\[ \textbf{(L)}\quad\Rightarrow\quad \textbf{(CL)}\quad\Rightarrow\quad \textbf{(SL)}. \]
As we see in Examples \ref{exa.dimension} and \ref{exa.alltwistsrational}, the
first implication cannot be reversed.
When $G$ is the trivial group, \textbf{(L)} and \textbf{(CL)} are both equivalent to rationality, and \textbf{(SL)}, to stable rationality.
Since there are
known examples of stably rational varieties that are not rational \cite{BCTSSD}, also the second implication cannot be reversed.
\end{rema}

\begin{exam}
\label{exa.Q8Q8}
We let the quaternion group $\mathfrak{Q}_8$ act linearly on $\bP^1$, via faithful action of the $\mathfrak{K}_4$-quotient, with center $Z\subset \mathfrak{Q}_8$ acting trivially.
The product action of $G=\mathfrak{Q}_8\times \mathfrak{Q}_8$ on $X=\bP^1\times \bP^1$ has trivial action of $Z\times Z$.
Since any $\bP(V)$, $\dim(V)=3$, has trivial action of a subgroup of $G$ of order $\ge 8$, the action is not linearizable.
Being a product of linear actions, it is coarsely linearizable.
Thus it is also stably linearizable.
It is also interesting to note that $X\times \bP^n$ is not linearizable for any $n$, where $G$ acts trivially on the factor $\bP^n$.
Indeed, if $X\times \bP^n\sim_G \bP(V)$,
then $Z\times Z$ must act trivially on $\bP(V)$, thus $X\times\bP^n\sim_{\mathfrak{K}_4\times \mathfrak{K}_4}\bP(V)$.
This would contradict the
equivariant birational invariance of the Amitsur group $\mathrm{Am}^2$; cf.\ \cite[Sect.\ 5]{KTsurvey}:
$$
\mathrm{Am}^2(X\times \bP^n,\mathfrak{K}_4\times \mathfrak{K}_4)\cong \bZ/2\bZ\oplus \bZ/2\bZ,
$$
but any projective space has cyclic $\mathrm{Am}^2$.
\end{exam}

\begin{rema}
\label{rem.SLwhy}
Example \ref{exa.Q8Q8} illustrates why \textbf{(SL)} needs to be defined by the condition $X\times \bP(U) \sim_G \bP(V)$, rather than the linearizability of $X\times \bP^n$ for some $n$.
For \emph{generically free} actions, the two formulations are equivalent, since then we have
$X\times \bP(U) \sim_G X\times \bP^n$, $n=\dim(U)-1$,
by the No-Name Lemma; cf.\ \cite[\S 3.5]{KTsurvey}.
\end{rema}

\begin{prop}
\label{prop.implicationsXY}
For projective $G$-varieties $X$ and $Y$ we have:
\[ X\sim_GY\quad\Rightarrow\quad X\sim_G^cY\quad\Rightarrow\quad X\sim_G^sY. \]
\end{prop}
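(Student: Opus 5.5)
The plan is to treat the two implications separately. For each, we work with the single-torsor characterization of Theorem~\ref{thm.onetorsorXY}, taking $Z=V$ a faithful $G$-representation. Such a $V$ has a generically free action, and it is versal: linear actions are $G$-unirational, and \textbf{(U)} $\Rightarrow$ \textbf{(V)}.

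\emph{First implication.} Suppose $X\sim_G Y$, witnessed by proper $G$-birational morphisms $X\leftarrow W\to Y$ (or a chain of these). Twisting is functorial for equivariant morphisms. So for any torsor $T\colon L/K$ we obtain morphisms ${}^TX\leftarrow {}^TW\to {}^TY$. Each of these is proper and birational, because both properties descend along the finite étale base change $L/K$: after base change to $L$ the maps become $W_L\to X_L$ and $W_L\to Y_L$, which are proper and birational. Hence ${}^TX\sim{}^TY$ for every $T$, which is condition (i) of Theorem~\ref{thm.onetorsorXY}. To keep everything quasi-projective, so that twists exist, I would take $W$ to be the closure of the graph inside $X\times Y$, which is projective.

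\emph{Second implication.} Suppose $X\sim^c_G Y$, so ${}^{T_V}X\sim {}^{T_V}Y$ over $K=k(V)^G$. The key step is to turn this back into an equivariant statement. Base change to $L=k(V)$ gives $X_{k(V)}\sim Y_{k(V)}$ compatibly with the $G$-actions. Spreading out over an invariant dense open $U\subset V$, as in the proof of Theorem~\ref{thm.onetorsorXY}, yields a $G$-birational equivalence
\[ X\times V\sim_G Y\times V, \]
compatible with the projections to $V$. Compactifying $V\subset\bP(V\oplus k)$, where $G$ acts trivially on the extra summand, gives
\[ X\times\bP(V\oplus k)\sim_G Y\times\bP(V\oplus k), \]
which is exactly $X\sim_G^s Y$.

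The main obstacle is this descent/spreading-out step: an isomorphism of function fields over $k(V)^G$, after base change, must really come from a $G$-equivariant birational map of $X\times V$ and $Y\times V$. I would handle it by the Galois-descent identity $({}^{T_V}X)_{k(V)}\cong X_{k(V)}$, under which the $G$-action on the left corresponds to the diagonal action on $X\times V$ at the generic point. With that identification, a birational map defined over $K$ pulls back to an equivariant birational map over $k(V)$ and extends to an invariant open subset.
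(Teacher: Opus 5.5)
Your proof is correct and follows the paper's route. The paper calls the first implication clear. For the second, it likewise applies Theorem~\ref{thm.onetorsorXY} to a single linear versal action and spreads out the birational map of twists to a $G$-birational map of the products. The only difference is cosmetic: the paper takes $Z=\bP(V)$ with generically free action and obtains $X\times\bP(V)\sim_G Y\times\bP(V)$ directly, whereas you take $Z=V$ and then compactify to $\bP(V\oplus k)$.
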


\begin{proof}
The first implication is clear.
For the second, we take $V$ to be a $G$-representation, such that the $G$-action on $\bP(V)$ is generically free,
and apply Theorem \ref{thm.onetorsorXY} to the versal $G$-action on $Z=\bP(V)$.
A birational equivalence
${}^{T_Z}X\sim {}^{T_Z}Y$ gives an equivariant birational equivalence
$X\times \bP(V)\sim_G Y\times \bP(V)$.
\end{proof}

\begin{rema}
\label{rem.SL}
Condition \textbf{(SL)} may be formulated equivalently with (compactifications of) representations, rather than projectivized representations, as is done, for instance, in \cite[Sect.\ 1]{DR}.
We summarize other reformulations here.
Let $U$ and $V$ be $G$-representations of respective dimensions $d$, $e\ge 1$, and $W$ a $G$-representation, such that the $G$-action on $\bP(W)$ is generically free.
Suppose that
$$
X\times \bP(U)\sim_G Y\times \bP(V).
$$
Then the same holds after passing to products with $\bP(W)$.
The No-Name Lemma gives 
$$
\bP(U)\times \bP(W)\sim_G \bP(1^d\oplus W) \quad \text{  and } \quad 
\bP(V)\times \bP(W)\sim_G \bP(1^e\oplus W);
$$
cf.\ \cite[Cor.\ 3.3]{KTsurvey}).
So
$$
X\times \bP(1^d\oplus W)\sim_G Y\times \bP(1^e\oplus W).
$$
In particular, $X$ is stably linearizable if and only if 
$$
X\times \bP(1^d\oplus W)\sim_G \bP(1^e\oplus W),
$$
for some $d$, $e\ge 1$.
Also, if $X$ and $Y$ have the same dimension, then there is no loss of generality in taking $U=V$ in the definition of $X\sim_G^sY$.
\end{rema}

\begin{exam}
\label{exa.implicationsXPV}
Taking $Y$ to be $\bP(V)$ in Proposition \ref{prop.implicationsXY} recovers the chain of implications among the notions of linearizability.
By definition, \textbf{(L)} is the condition $X\sim_G\bP(V)$ for some $V$.
By Example \ref{exa.cbrep}, \textbf{(CL)} is equivalent to the condition $X\sim_G^c \bP(V)$ for some $V$.
Arguing as in Remark \ref{rem.SL},
\textbf{(SL)} is equivalent to $X\sim_G^s\bP(V)$ for some $V$.
\end{exam}

Conditions \textbf{(U)}, \textbf{(SL)}, and \textbf{(L)} are defined via $G$-equivariant geometry, while
\textbf{(WV)}, \textbf{(V)}, and the new Condition \textbf{(CL)} are defined in terms of torsors. We now give interpretations in terms of equivariant geometry.

\begin{prop}
\label{prop.WV}
Let $X$ be a $G$-variety.
Let $W$ be a $G$-representation, such that $G$ acts generically freely on $\bP(W)$.
The following are equivalent.
\begin{itemize}
\item[(i)] The $G$-action on $X$ is weakly versal.
\item[(ii)] For some $G$-representation $V$ of positive dimension, there is a $G$-rational map $\bP(V)\dashrightarrow X$.
\item[(iii)] There is a $G$-rational map $\bP(W)\dashrightarrow X$.
\end{itemize}
\end{prop}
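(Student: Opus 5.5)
We prove the cycle (i) $\Rightarrow$ (iii) $\Rightarrow$ (ii) $\Rightarrow$ (i).

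\emph{(iii) $\Rightarrow$ (ii).} This is immediate: take $V=W$. It has positive dimension, since $\bP(W)$ is nonempty with a generically free action.

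\emph{(i) $\Rightarrow$ (iii).} Apply weak versality to the torsor $T_{\bP(W)}$. Since $G$ acts generically freely on $\bP(W)$, we may pass to a $G$-invariant dense open quasi-projective subset with free action. This gives the torsor
\[ L=k(\bP(W))\quad\text{over}\quad K=k(\bP(W))^G, \]
which is a field. Weak versality provides a $G$-equivariant morphism $\Spec(L)\to X$, that is, a $G$-equivariant point of $X$ with values in the function field of $\bP(W)$. Spreading out over a dense open subset of $\bP(W)$ gives a rational map $\bP(W)\dashrightarrow X$. It is $G$-equivariant because the generic point map is equivariant and equivariance of rational maps is checked at the generic point. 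Note that this map need not be dominant, which is consistent with the statement. One detail needs care: $X$ is only required to be reduced, separated and of finite type, with components permuted transitively. A $K$-algebra morphism from $\Spec(L)$ still defines a point, and the rational map exists because $X$ is of finite type (limit argument).

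\emph{(ii) $\Rightarrow$ (i).} This is the substantive direction. Let $f\colon \bP(V)\dashrightarrow X$ be $G$-rational, and let $T=\Spec(L)$ be a torsor over $K$. It suffices to produce a $G$-equivariant morphism $\Spec(L)\to U$, where $U\subset\bP(V)$ is the invariant dense open set on which $f$ is defined; composing with $f$ then gives the required equivariant $\Spec(L)\to X$. I would use that $V$ itself is versal: by Example \ref{exa.Vtwist}, ${}^TV\cong\bA^n$, so its $K$-points are dense, since $K$ is infinite (it contains $k$). By Proposition \ref{prop.DR}(ii) the $G$-action on $V$ is versal.

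Let $\tilde U\subset V\smallsetminus\{0\}$ be the preimage of $U$. It is an invariant dense open subset, using $\dim V\ge1$. Versality yields an equivariant $\Spec(L)\to \tilde U$, and composing with the equivariant projection $\tilde U\to U$ gives the desired morphism. Equivalently, ${}^T\bP(V)\cong\bP^{n-1}$ has dense $K$-points, so ${}^TU(K)\neq\emptyset$, and one applies Proposition \ref{prop.DR}(i) to the twist of $X$. However, that version needs quasi-projectivity of $X$ for the twist to exist, so I would phrase everything via equivariant morphisms from $\Spec(L)$ to avoid the issue.

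The main obstacle I expect is this last direction. The point is to use density of rational points on twists of representations, rather than mere existence of points, since $f$ is defined only on an open set.
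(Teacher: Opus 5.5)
Your proof is correct. For (iii) $\Rightarrow$ (ii) and (i) $\Rightarrow$ (iii) it matches the paper. The paper uses the twist-free form of Theorem~\ref{thm.onetorsorP}: \textbf{(WV)} holds if and only if there is a $G$-rational map $Z\dashrightarrow X$, for $Z$ quasi-projective with generically free versal action. Taking $Z=\bP(W)$ then gives (i) $\Rightarrow$ (iii), and this is exactly your argument of applying weak versality to the generic torsor and spreading out.

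The routes differ in (ii) $\Rightarrow$ (i). $G$ need not act generically freely on $\bP(V)$, so the paper cannot take $Z=\bP(V)$. Instead it composes your map with the projection $\bP(V)\times\bP(W)\to\bP(V)$ and takes $Z=\bP(V)\times\bP(W)$. This product is generically free because of the factor $\bP(W)$. It is versal as well, for instance because it is $G$-unirational via $\bP(V\oplus W)\dashrightarrow\bP(V)\times\bP(W)$; the paper does not spell this out.

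You notice that this half of the argument needs only versality of the source, not generic freeness. You obtain versality directly from the affine cone:
\begin{itemize}
\item $V$ is versal by Example~\ref{exa.Vtwist} together with Proposition~\ref{prop.DR}(ii).
\item The preimage $\tilde U\subset V\smallsetminus\{0\}$ of the domain of definition $U$ is an invariant dense open set.
\item Versality then gives an equivariant $\Spec(L)\to\tilde U$, which you push down to $U$ and on to $X$.
\end{itemize}

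The paper's route gains uniformity: both directions are instances of a single statement, with $\bP(W)$ supplying the generic freeness that Theorem~\ref{thm.onetorsorP} requires. Your route is more self-contained and makes explicit the versality input the paper leaves implicit. It also shows that generic freeness plays no role in (ii) $\Rightarrow$ (i). Working throughout with equivariant morphisms from $\Spec(L)$ rather than with twists correctly avoids any quasi-projectivity assumption on $X$.
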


\begin{proof}
In the proof we use the following form of Theorem \ref{thm.onetorsorP}, which makes no reference to twists:
\[ \text{\textbf{(WV)}}\quad \Leftrightarrow \quad \exists\text{ $G$-rational }Z\dashrightarrow X \]
(where $Z$ is as in Theorem \ref{thm.onetorsorP}).
The implication (iii) $\Rightarrow$ (ii) is trivial.
Given (ii), there is also a $G$-rational map $\bP(V)\times \bP(W)\dashrightarrow X$; so we obain Condition \textbf{(WV)} by taking $Z=\bP(V)\times \bP(W)$.
Taking $Z=\bP(W)$, we get
(i) $\Rightarrow$ (iii).
\end{proof}

The next statement makes reference to the image closure of a rational map, which is the closure of the image of any dense open on which the map is defined.

\begin{prop}
\label{prop.V}
Let $X$ be a $G$-variety.
Let $W$ be a $G$-representation, such that $G$ acts generically freely on $\bP(W)$.
The following are equivalent.
\begin{itemize}
\item[(i)] The $G$-action on $X$ is versal.
\item[(ii)] The union of image closures of $G$-rational maps $\bP(V)\dashrightarrow X$, over all
$G$-representations $V$, is dense in $X$.
\item[(iii)] The union of image closures of $G$-rational maps $\bP(W)\dashrightarrow X$ is dense in $X$.
\end{itemize}
\end{prop}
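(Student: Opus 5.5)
The plan is to follow the pattern of Proposition \ref{prop.WV}. First I will extract from the proof of Theorem \ref{thm.onetorsorP} (the case of a Zariski dense set of rational points) a reformulation of versality that makes no reference to twists. Let $Z$ be a quasi-projective $G$-variety with generically free versal action. The claim is that $X$ is versal if and only if the union of image closures of $G$-rational maps $Z\dashrightarrow X$ is dense in $X$.

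To see this, note that $k(Z)^G$-points of ${}^{T_Z}X$ are the same as $G$-rational maps $Z\dashrightarrow X$. A set of such points is dense in ${}^{T_Z}X$ exactly when it meets every nonempty open subset of ${}^{T_Z}X$. I will check in two steps that this matches the condition that the image closures meet every invariant dense open subset of $X$.

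\begin{itemize}
\item Invariant dense opens $W\subset X$ give opens ${}^{T_Z}W$, by Remark \ref{rema:twist-stab}.
\item Conversely, an open subset of ${}^{T_Z}X$ corresponds to a $G$-invariant open subset of $X\times U$ for some invariant dense open $U\subset Z$. A point of ${}^{T_Z}X$ lies in that open set precisely when the graph of the corresponding rational map meets it.
\end{itemize}

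Density of image closures in $X$ alone is weaker than density of graphs in $X\times Z$. I therefore expect the main obstacle to be handling opens of ${}^{T_Z}X$ that are not pulled back from $X$. To get around it, I will exploit the freedom to precompose with maps $Z'\to Z$. Concretely, I will replace $Z=\bP(W)$ by $\bP(W)\times\bP(W)$ (which is versal and generically free), or use the No-Name Lemma, so that each rational map from $\bP(W)$ can be ``spread out'' over the torsor variable. Under (ii), a dense family of maps $\bP(V)\dashrightarrow X$ then yields, by the argument of Proposition \ref{prop.WV} (composing with projection $\bP(V)\times\bP(W)\to\bP(V)$), maps from $Z=\bP(V)\times\bP(W)$. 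Their graphs are dense in $X\times Z$ after translating by the $\bP(W)$ factor.

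Alternatively, one can bypass twists and verify (i) directly from the definition. For (ii) $\Rightarrow$ (i), take an invariant dense open $W'\subset X$ and a torsor $T\colon L/K$. By (ii), some map $\bP(V)\dashrightarrow X$ has image meeting $W'$, so its restriction to the preimage of $W'$ is a $G$-rational map $\bP(V)\dashrightarrow W'$. Since $\bP(V)$ is versal (by Example \ref{exa.Vtwist} twists of $\bP(V)$ are $\bP^{n-1}$, whose rational points are dense, so Proposition \ref{prop.DR}(ii) applies), there is an equivariant $\Spec(L)$-point in the domain of definition. Composing gives $\Spec(L)\to W'$, so $W'$ is weakly versal and $X$ is versal. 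This direct route avoids the obstacle above, and I will use it.

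The implication (iii) $\Rightarrow$ (ii) is trivial. For (i) $\Rightarrow$ (iii), let $W'\subset X$ be any invariant dense open. Since $W'$ is weakly versal, Proposition \ref{prop.WV} (iii) gives a $G$-rational map $\bP(W)\dashrightarrow W'$, and hence a map into $X$ whose image meets $W'$. The union of image closures therefore meets every invariant dense open subset. Every nonempty open subset contains an invariant one, namely the intersection of the $G$-translates of the complement of the $G$-saturation of a closed set, so the union is dense. One must take care that $W'$ is a $G$-variety in the paper's sense, which holds since it is a dense open in one.
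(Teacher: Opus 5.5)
Your final direct argument is correct and is essentially the paper's proof, which simply calls the statement immediate from Proposition \ref{prop.WV} applied to each invariant dense open $W'\subset X$; in (ii)$\Rightarrow$(i) you could cite Proposition \ref{prop.WV}(ii)$\Rightarrow$(i) for $W'$ rather than deriving versality of $\bP(V)$ from Example \ref{exa.Vtwist} and Proposition \ref{prop.DR}, and you can drop the exploratory twist-based first part. One small correction in the last step: on a reducible $G$-variety a nonempty open set need not contain a nonempty invariant open, so argue instead that the closure of the union of image closures is $G$-invariant, and hence its complement, if nonempty, is an invariant open that is automatically dense because $G$ permutes the irreducible components transitively.
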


\begin{proof}
This is immediate from Proposition \ref{prop.WV}.
\end{proof}

\begin{prop}
\label{prop.coarselybirational}
Let $X$ and $Y$ be quasi-projective $G$-varieties.
Let $W$ be a $G$-representation, such that $G$ acts generically freely on $\bP(W)$.
Then $X$ and $Y$ are coarsely $G$-birational,
if and only if there exists a proper $G$-birational map $X\times \bP(W)\dashrightarrow Y\times \bP(W)$, fitting in a commutative diagram
\[
\xymatrix{
X\times \bP(W) \ar@{-->}[rr]\ar[dr] && Y\times \bP(W)\ar[dl] \\
& \bP(W)
}
\]
with the projection morphisms to $\bP(W)$.
\end{prop}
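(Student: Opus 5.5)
The plan is to apply Theorem~\ref{thm.onetorsorXY} with $Z=\bP(W)$, and to translate between birationality of twists over $K=k(\bP(W))^G$ and $G$-birationality over $\bP(W)$ of the products. First I check that $Z=\bP(W)$ is admissible. It is quasi-projective, and the action is generically free by hypothesis. It is versal because a linear action on $\bP(W)$ is $G$-unirational, trivially via the identity $\bP(W)\dashrightarrow\bP(W)$, and \textbf{(U)} $\Rightarrow$ \textbf{(V)}. So by Definition~\ref{def.coarsebirationality}, $X\sim_G^cY$ holds if and only if ${}^{T_Z}X\sim{}^{T_Z}Y$ over $K$, where $T_Z=\Spec(L)$ and $L=k(\bP(W))$.

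Next I identify the twist. The twist ${}^{T_Z}X$ is the quotient of $X_L=X\times\Spec(L)$ by the free diagonal $G$-action. Let $U\subset\bP(W)$ be an invariant dense open subset on which $G$ acts freely. Then $(X\times U)/G\to U/G$ is a family whose generic fiber is ${}^{T_Z}X$, since $X\times\Spec(L)$ is the generic fiber of $X\times U\to U$ and taking quotients commutes with this localization. The same holds for $Y$.

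For ($\Leftarrow$), start from a proper $G$-birational map $X\times\bP(W)\dashrightarrow Y\times\bP(W)$ over $\bP(W)$. Factor it into proper birational $G$-equivariant morphisms and their inverses, all over $\bP(W)$; I expect the closure-of-graph construction, as in the proof of Theorem~\ref{thm.onetorsorXY}, to provide such a factorization compatible with the projections. Restricting over $U$, taking $G$-quotients, and passing to the generic fiber over $U/G$ then gives proper birational morphisms over $K$. Hence ${}^{T_Z}X\sim{}^{T_Z}Y$.

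For ($\Rightarrow$), the proof of Theorem~\ref{thm.onetorsorXY} already provides, after shrinking $Z$ to an invariant dense open $U$, a diagram $X\times U\leftarrow V\rightarrow Y\times U$ of equivariant projective birational morphisms over $U$. The main obstacle is to extend this from $U$ to all of $\bP(W)$ while keeping it compatible with the projections. My plan is to take $\overline V$ to be the closure of $V$ in $X\times Y\times\bP(W)$, regarded as a variety over $\bP(W)$. Its projections to $X\times\bP(W)$ and $Y\times\bP(W)$ are $G$-equivariant, birational, and compatible with the maps to $\bP(W)$. If $X$ and $Y$ are projective, they are also proper. In the merely quasi-projective case, properness can fail, so I would either pass to equivariant projective models, invoking the convention for non-proper varieties, or define the proper birational map as the restriction of such a graph correspondence. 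Finally, a proper birational map of varieties over $K$ spreads out over a dense open subset of $U/G$, and pulling back to $U$ gives the diagram, so this direction reduces to the extension step just described.
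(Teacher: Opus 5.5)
Your route is the paper's own: apply Theorem~\ref{thm.onetorsorXY} with $Z=\bP(W)$, which is versal since \textbf{(U)}$\Rightarrow$\textbf{(V)}. Then translate between proper birational maps of the twists over $K=k(\bP(W))^G$ and proper $G$-birational maps of the products over $\bP(W)$. Your ($\Leftarrow$) direction is fine. So is ($\Rightarrow$) when $X$ and $Y$ are projective: the closure of the spread-out correspondence in $X\times Y\times\bP(W)$ is proper and birational over both $X\times\bP(W)$ and $Y\times\bP(W)$. The paper's one-line proof relies on exactly this extension from an invariant open $U\subset\bP(W)$ to all of $\bP(W)$, without comment.

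The gap is the quasi-projective case of ($\Rightarrow$), and neither of your proposed fixes works. The relation $\sim$ is proper birationality, which is not preserved under passing to projective models (e.g.\ $\bA^1\not\sim\bP^1$), so the convention used for \textbf{(L)}, \textbf{(SL)} or rationality is not available. Nor does restricting the graph correspondence help: over $\bP(W)\setminus U$ it simply fails to be proper over $X\times\bP(W)$ or $Y\times\bP(W)$. In fact no argument can close this step. Take $G=\bZ/2\bZ$, $X=\bA^1$ with trivial action and $Y=\bA^1$ with $y\mapsto -y$. Every twist of either is $\bA^1_K$ (Example~\ref{exa.Vtwist}), so $X\sim_G^cY$. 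Now suppose there is a proper $G$-birational map $X\times\bP(W)\dashrightarrow Y\times\bP(W)$ over $\bP(W)$. Over $L=k(\bP(W))$ it restricts to a proper birational self-map of the normal curve $\bA^1_L$, i.e.\ $x\mapsto ax+b$ with $a\in L^\times$, and equivariance gives $a(pg)=-a(p)$. Fix a prime divisor $E\subset\bP(W)$. The Gauss valuation of $L(x)$ extending $v_E$ has a center on $X\times\bP(W)$, hence on $Y\times\bP(W)$, which forces $v_E(a),v_E(b)\ge 0$. The Gauss valuation in $y$ symmetrically forces $v_E(a)\le 0$. So $a$ has no zeros or poles on $\bP(W)$, hence $a\in k^\times$, contradicting $a(pg)=-a(p)$. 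The proposition, and the paper's proof of it, is therefore correct only for proper (e.g.\ projective) $X$ and $Y$, which is the setting of Corollary~\ref{cor.CL}. In that setting your argument is complete.
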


\begin{proof}
We apply Theorem \ref{thm.onetorsorXY} and use that a proper birational map of twists ${}^{T_Z}X\sim {}^{T_Z}Y$ comes from a proper $G$-birational map of products with $Z$, compatible with the projection maps to $Z$.
\end{proof}

\begin{coro}
\label{cor.CL}
Let $X$ be a projective $G$-variety of dimension $d$.
Let $W$ be a $G$-representation, such that $G$ acts generically freely on $\bP(W)$.
The following are equivalent.
\begin{itemize}
\item[(i)] The $G$-action on $X$ is coarsely linearizable.
\item[(ii)] There exists a $G$-birational map $X\times \bP(W)\dashrightarrow \bP^d\times \bP(W)$, that is compatible with the projection morphisms to $\bP(W)$.
\end{itemize}
\end{coro}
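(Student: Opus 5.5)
The plan is to read Corollary~\ref{cor.CL} off Proposition~\ref{prop.coarselybirational} by taking $Y=\bP^d$ with trivial $G$-action. By Definition~\ref{def.CL} and the remark after it, (i) means $X\sim_G^c\bP^d$. Here $\bP^d$ is a quasi-projective $G$-variety, and its twist by any torsor is $\bP^d_K$. So the equivalent conditions of Theorem~\ref{thm.onetorsorXY} for the pair $(X,\bP^d)$ are exactly ``${}^TX$ is rational for all $T$''. That is the property rationality in Theorem~\ref{thm.onetorsorP}, i.e., Condition \textbf{(CL)}.

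Proposition~\ref{prop.coarselybirational} then says that (i) is equivalent to the existence of a proper $G$-birational map $X\times\bP(W)\dashrightarrow \bP^d\times\bP(W)$ compatible with the projections to $\bP(W)$. This immediately gives (i) $\Rightarrow$ (ii).

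For (ii) $\Rightarrow$ (i), we have to upgrade an arbitrary $G$-birational map over $\bP(W)$ to a proper one. Both $X\times\bP(W)$ and $\bP^d\times\bP(W)$ are projective, since $X$ is projective. For projective varieties every birational map is proper: the closure of its graph is projective and maps birationally and properly to both sides. This graph is $G$-invariant and lies over $\bP(W)$, so the resulting factorization is $G$-equivariant and compatible with the projections. Proposition~\ref{prop.coarselybirational} then applies and gives (i).

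The only point needing care is this properness upgrade, together with checking that $Y=\bP^d$ with trivial action is admissible in Theorem~\ref{thm.onetorsorXY}. Both are routine. Alternatively, one can argue directly as in the proof of Proposition~\ref{prop.coarselybirational}: a $G$-birational map over $\bP(W)$ is the same as a birational map of twists ${}^{T_{\bP(W)}}X\sim\bP^d$ over $k(W)^G$, and a birational map of projective varieties over a field is proper. Since $G$ acts generically freely on $\bP(W)$ and the action is versal, Theorem~\ref{thm.onetorsorP} with $Z=\bP(W)$ and the property rationality gives (i).
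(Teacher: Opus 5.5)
Your proposal is correct and follows essentially the argument the paper intends. The paper states the corollary without proof as the specialization of Proposition~\ref{prop.coarselybirational} to $Y=\bP^d$ with trivial action, since $X\sim_G^c\bP^d$ is the reformulation of \textbf{(CL)}; the word ``proper'' can be dropped there because $X\times\bP(W)$ and $\bP^d\times\bP(W)$ are projective, so the $G$-invariant graph closure over $\bP(W)$ makes any $G$-birational map over $\bP(W)$ proper. (In your alternative route, the twist lives over $k(\bP(W))^G$, not $k(W)^G$, but this slip does not affect the argument.)
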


\begin{rema}
\label{rem.manyconditions}
The implications, among the conditions on smooth projective $G$-varieties mentioned so far, are displayed here in one chain:
\[ \textbf{(L)}\ \Rightarrow\ 
\textbf{(CL)}\ \Rightarrow\ 
\textbf{(SL)}\ \Rightarrow\ \textbf{(U)}\ \Rightarrow\ \textbf{(V)}\ \Rightarrow\ \textbf{(WV)}\ \Rightarrow\ \textbf{(A)}. \]
The strictness of these implications is explained, for the first two implications, in Remark \ref{rem.LCLSL}, and
for the remaining ones, in Example \ref{exa.WVV}.
\end{rema}

\section{Decomposition of the diagonal}
\label{sect:obstr}

\subsection*{Equivariant Chow groups}
%We recall equivariant Chow groups introduced in \cite{EG98}.
Let $X$ be a quasi-projective $G$-variety over $k$, of dimension $n$. We recall the definition of 
equivariant Chow groups
\[
\CH_i^G(X) := \CH_{i + \ell}((X\times U)/G),
\]
with diagonal action of $G$ on $X$ and on
$U$, a $G$-invariant Zariski open subset of an
$\ell$-dimensional $G$-representation $V$, such that $G$ acts freely on $U$ and $\codim(V \setminus U)>n-i$; see \cite{EG98}. 
This does not depend on $V$ and $U$. 

An $i$-dimensional $G$-invariant cycle on $X$, as well as 
an $(i + \ell)$-dimensional $G$-invariant cycle on $X \times V$, define classes 
in $\CH_i^G(X)$.
Taking $X$ to be a point yields
\[ \mathrm{CH}^G_i:=\mathrm{CH}^G_i(\Spec(k)), \]
the case treated in \cite{totaro}.
Analogous to the degree map of a proper variety, when $X$ is projective we have
\[ \deg_i^G\colon \CH_i^G(X)\to \CH_i^G. \]
For $i=0$ we have $\CH_i^G=\bZ$ and thus integer-valued $\deg_0^G$, as in the non-equivariant case.

\begin{lemm}\cite[Proposition 1]{EG98}
\label{lemma:G-invariantrepresentability}
If $\alpha \in \CH^G_i(X)$, then there exists an $\ell$-dimensional $G$-representation $V$, and an $(i+\ell)$-dimensional $G$-invariant cycle $Z=\sum a_i[Z_i]$ on $X \times V$, that represents the class of $\alpha$.
\end{lemm}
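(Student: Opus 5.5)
The plan is to unwind the definition of $\CH_i^G(X)$ as $\CH_{i+\ell}((X\times U)/G)$ and lift a representing cycle from the quotient to $X\times V$. Fix a representation $V$ of dimension $\ell$ and an invariant open $U\subset V$ on which $G$ acts freely, with $\codim(V\setminus U)>n-i$. This is the setting in which $\CH_i^G(X)$ is computed; since the group is independent of the choice, we may take any such pair. Write $\alpha$ as the class of a cycle $\sum a_j[W_j]$ on the quotient $(X\times U)/G$, with each $W_j$ an integral subvariety of dimension $i+\ell$.

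First, pull each $W_j$ back along the quotient map $\pi\colon X\times U\to (X\times U)/G$. Because $G$ acts freely on $X\times U$, the map $\pi$ is a finite étale $G$-torsor. Hence $\pi^{-1}(W_j)$ is a reduced $G$-invariant closed subscheme of pure dimension $i+\ell$, namely a $G$-orbit of irreducible components. Flat pullback $\pi^*$ identifies $\CH_{i+\ell}((X\times U)/G)$ with the group of $G$-invariant cycle classes on $X\times U$ modulo $G$-equivariant rational equivalence; this is the standard Edidin--Graham identification. So we obtain a $G$-invariant cycle $\sum a_j[\pi^{-1}(W_j)]$ on $X\times U$ corresponding to $\alpha$.

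Second, take closures in $X\times V$. Set $Z_j=\overline{\pi^{-1}(W_j)}\subset X\times V$. Each $Z_j$ is $G$-invariant, since closure commutes with the $G$-action, and it has dimension $i+\ell$. Put $Z=\sum a_j[Z_j]$. By construction $Z$ restricts to the cycle from the first step on the open subset $X\times U$.

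Third, check that $Z$ represents $\alpha$, in the sense recalled just before the lemma. There, an invariant cycle on $X\times V$ defines a class in $\CH_i^G(X)$ by restriction to $X\times U$ followed by descent to the quotient. The codimension condition guarantees that restriction $\CH^G_{i+\ell}(X\times V)\to \CH^G_{i+\ell}(X\times U)$ is an isomorphism in this degree, because the removed locus $X\times(V\setminus U)$ has dimension $n+\dim(V\setminus U)<i+\ell$. Our $Z$ restricts to the chosen representative, so its class is $\alpha$.

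The main obstacle is the bookkeeping in the first step: one must confirm that every class in the quotient's Chow group is hit by descent of honest invariant cycles. This is where freeness is used, since $\pi$ is a torsor and $\pi^{-1}$ of a subvariety is a single orbit of components with multiplicity one. One also has to make sure the multiplicities $a_j$ are not disturbed, for example by factors $|G|$ arising from pushforward.
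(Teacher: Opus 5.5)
Your proposal is correct and is essentially the standard argument from the cited source \cite[Proposition 1]{EG98}; the paper gives no proof of its own. That argument, like yours, pulls back cycles along the free quotient $X\times U\to (X\times U)/G$, which is a bijection between cycles on the quotient and $G$-invariant cycles on $X\times U$, and then takes closures in $X\times V$. Under the convention that an invariant cycle on $X\times V$ defines a class by restriction to $X\times U$ followed by descent, the lemma is already proved after your second step, so the codimension check in your third step is only a consistency verification.
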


Finally, over a $k$-point of $U/G$, the fiber of 
\[
(X\times U)/G \to U/G
\]
is isomorphic to $X$.
Pullback to a fiber gives a homomorphism
\[
\CH_i^G(X) \to \CH_i(X),
\]
which is independent of the choice of fiber.
The image is contained in the $G$-invariant subgroup $\CH_i(X)^G$.

\subsection*{Equivariant decomposition of the diagonal}

%a $G$-variety is a variety over $\bC$, with regular action of a finite abelian group $G$.
%For a module $M$, $M_{\mathrm{tf}}$ denotes $M/M_{\mathrm{tor}}$ where $M_{\mathrm{tor}}$ is the torsion part of $M$.

Let $X$ be a projective variety of dimension $n$ over a field $K$.
We recall,
$X$ is said to have
\emph{integral decomposition of the diagonal} if there exist a zero-dimensional cycle class $\zeta\in \mathrm{CH}_0(X)$ of degree $1$ and closed $D\subsetneq X$, such that the class of the diagonal $\Delta_X$ satisfies
\[
[\Delta_X]=([X]\times \zeta)+\gamma
\]
in $\mathrm{CH}_n(X\times X)$, for an
$n$-dimensional cycle class $\gamma$,
supported on $D\times X$.
This implies that
$\zeta=[x]$ in $\mathrm{CH}_0(X)$, for any smooth $K$-point $x$ in the complement of $D$ (by
restriction to $x\times X$).
Thus, if $X(K)$ is dense in $X$, the definition may be equivalently stated with the existence of a smooth $K$-point $x$, taking the place of $\zeta$ in the equality.

\begin{defi}
\label{defn:diag}
Let $X$ be an irreducible projective $G$-variety of dimension $n$.
We say that $X$ has
\emph{$G$-equivariant integral decomposition of the diagonal}
if there exist $\zeta\in \mathrm{CH}_0^G(X)$, with $\deg^G_0(\zeta)=1$, and
$G$-invariant closed $D\subsetneq X$,
such that
\begin{equation}
\label{eqn:diag}
[\Delta_X]=([X]\times \zeta)+\gamma
\end{equation}
in $\mathrm{CH}_n^G(X \times X)$,
for some $\gamma\in \mathrm{im}(
\mathrm{CH}^G_n(D\times X)\to \mathrm{CH}^G_n(X\times X))$.
\end{defi}

\begin{lemm}
\label{lem:affinebundle}
Let $K$ be a field,
$X$ a finite-type $K$-scheme, $\ell$ a positive integer, and $S\subsetneq \bA^\ell_K$ a closed subscheme.
The inclusion map
\[ X\times S\to X\times \bA^\ell_K \]
induces, by pushforward, the zero map on Chow groups.
\end{lemm}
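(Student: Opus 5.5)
We must show that for every cycle on $X\times S$, its pushforward to $X\times \bA^\ell_K$ is zero in $\CH_*(X\times\bA^\ell_K)$. The approach is to use the fact that $\bA^\ell$ is covered by lines, so any proper closed subset of $\bA^\ell$ can be moved off to infinity along some direction. It suffices to treat the class of an irreducible closed subvariety $Z\subset X\times S$ of dimension $i$, since such classes generate $\CH_i(X\times S)$ and pushforward is linear.

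The key device is a translation. Since $S\subsetneq\bA^\ell_K$ is a proper closed subscheme, after a finite extension we could find a point outside $S$. To avoid field extensions, I would instead use a direct geometric argument. Because $S\ne \bA^\ell_K$, there is a nonzero polynomial $f$ vanishing on $S$. By a linear change of coordinates (possible over the infinite field $K\supset k$), we may assume $f$ is monic in the last variable $t_\ell$ up to a scalar. The projection $\pi\colon\bA^\ell\to\bA^{\ell-1}$ forgetting $t_\ell$ then restricts to a finite morphism $S\to\bA^{\ell-1}$.

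Consider the family of translates. Let $\Phi\subset X\times\bA^\ell_K\times\bA^1$ be the closure of $\{(x,v+s e_\ell,s):(x,v)\in Z\}$, where $e_\ell$ is the last basis vector. Over $\bA^1$ this is flat, being a family of translates of $Z$, so for every $s\in K$ the class $[Z+se_\ell]$ is rationally equivalent to $[Z]$. Extend this to $\bP^1$ by taking the closure $\bar\Phi$ in $X\times\bA^\ell\times\bP^1$. The fiber over $\infty$ should be empty, because $Z+se_\ell$ escapes every bounded region in the $t_\ell$-direction: on $Z$ the coordinate $t_\ell$ satisfies the monic equation $f=0$ over the coordinates $t_1,\dots,t_{\ell-1}$, so it is integral over them and cannot accommodate an arbitrarily large shift. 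Concretely, a point of the closure over $s=\infty$ would require $f(v_1,\dots,v_{\ell-1},v_\ell - s)=0$ with $v$ fixed as $s\to\infty$, which is impossible since $f$ is monic in $t_\ell$. Once the fiber at $\infty$ is empty, the standard rational equivalence gives $[Z]=[\bar\Phi_0]-[\bar\Phi_\infty]=0$ in $\CH_i(X\times\bA^\ell_K)$.

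The main obstacle is making the claim that the fiber at $\infty$ is empty rigorous. The cleanest route is to show that $\bar\Phi\cap(X\times\bA^\ell\times\{\infty\})=\emptyset$ by exhibiting the function $1/s$ as vanishing nowhere on $\bar\Phi$ near $\infty$. Equivalently, one shows that $\Phi\to X\times\bA^\ell$ is a closed embedding with image inside the locus where $f(t_1,\dots,t_\ell-s)=0$, and that this locus is finite over $X\times\bA^\ell$ via the $s$-coordinate, hence closed in $X\times\bA^\ell\times\bP^1$ and missing $\infty$. Here monicity of $f(t-s)$ in $s$ gives integrality of $s$, so the locus is finite and therefore proper over $X\times\bA^\ell$. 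It follows that $\Phi$ is already closed in $X\times\bA^\ell\times\bP^1$. Flatness over $\bA^1$ is clear, since $\Phi\cong Z\times\bA^1$ via $(x,v,s)\mapsto(x,v-se_\ell,s)$, so its fiber at $0$ is $Z$ and its fiber at $\infty$ is empty. This gives $[Z]=0$, hence the pushforward vanishes.
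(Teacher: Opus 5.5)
Your translation argument is correct when $K$ is infinite, and it takes a genuinely different route from the paper's. The one real gap is that the lemma is stated for an arbitrary field $K$, while your argument only covers infinite $K$.

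\textbf{Comparison of the two routes.} The paper never writes down a rational equivalence. It uses homotopy invariance: flat pullback $\CH_i(X)\to\CH_{i+\ell}(X\times\bA^\ell_K)$ is an isomorphism, whose inverse is Gysin pullback to a fiber $X\times\{v\}$. It then chooses $v\notin S$, so every class supported on $X\times S$ pulls back to zero.

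You instead push each $Z\subset X\times S$ off to infinity. Once an equation $f$ of $S$ is monic in $t_\ell$, the family $\Phi\cong Z\times\bA^1$ lies in the locus $f(t_1,\dots,t_{\ell-1},t_\ell-s)=0$. That locus is finite over $X\times\bA^\ell_K$, so $\Phi$ is closed in $X\times\bA^\ell_K\times\bP^1$ and misses $s=\infty$. Fulton's Prop.~1.6 then gives $[Z]=[\Phi_0]-[\Phi_\infty]=0$ in $\CH_i(X\times\bA^\ell_K)$.

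Your route is elementary and self-contained: it uses only the definition of rational equivalence plus a Noether-normalization direction, and it produces an explicit rational equivalence. The paper's route is shorter, given homotopy invariance and refined Gysin maps. One small slip: the closed embedding is $\Phi\hookrightarrow X\times\bA^\ell_K\times\bA^1$, not $\Phi\to X\times\bA^\ell_K$.

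\textbf{The gap.} You make $f$ monic by a \emph{linear} change of coordinates, invoking that $K\supseteq k$ is infinite. The lemma allows finite $K$, and the paper treats that case separately, using two points off $S$ whose residue fields have coprime degrees over $K$. Over a finite field your step can fail. Take $K=\mathbb{F}_q$ and $S=V(t_1^qt_2-t_1t_2^q)\subset\bA^2_K$, the union of the $q+1$ $K$-rational lines through the origin. Then $S$ contains a line in every $K$-rational direction, so it is not finite over the projection along any such direction. Hence no affine-linear change of coordinates over $K$ makes an equation of $S$ monic in $t_2$.

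The repair is easy, in either of two ways.
\begin{itemize}
\item Use Nagata's triangular automorphism $t_i\mapsto t_i+t_\ell^{r^i}$ for $1\le i<\ell$, with $r$ larger than every exponent occurring in $f$. It makes $f$ monic in $t_\ell$ up to a scalar over any field. Since it induces an automorphism of $X\times\bA^\ell_K$ compatible with pushforward, the rest of your argument runs verbatim.
\item Run your argument over two finite extensions of $K$ of coprime degrees, each with more than $\deg f$ elements, so that a good linear direction exists over each. Then use that $\pi_*\pi^*$ is multiplication by the degree.
\end{itemize}

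For the application in the proof of Theorem~\ref{thm.idodtwist}, where $K=k(X)^G\supseteq k$, your argument as written suffices.
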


\begin{proof}
Pullback gives an isomorphism $\CH_i(X)\to \CH_{i+\ell}(X\times \bA^\ell_K)$, with inverse given by intersection with $X\times \{v\}$ for any $K$-point $v\in \bA^\ell_K$.
So it suffices to show the vanishing of the composite
\[ \CH_{i+\ell}(X\times S)\to \CH_{i+\ell}(X\times \bA^\ell_K)\to \CH_i(X) \]
with the inverse isomorphism.
This is clear
if $K$ is infinite, by choosing
$v\in \bA^\ell_K\setminus S$, while
the case that $K$ is finite can be treated with a pair of points $v$, $v'$ valued in extension fields of $K$ of coprime degrees.
\end{proof}

\begin{theo}
\label{thm.idodtwist}
Let $X$ be an irreducible projective $G$-variety over $k$ with generically free $G$-action.
The following are equivalent.
\begin{itemize}
\item[(i)] For every torsor $T$, ${}^TX$ has integral decomposition of the diagonal.
\item[(ii)] For some quasi-projective $G$-variety $Z$ over $k$ with generically free versal $G$-action, ${}^{T_Z}X$ has integral decomposition of the diagonal.
\item[(iii)] $X$ has $G$-equivariant integral decomposition of the diagonal.
\end{itemize}
\end{theo}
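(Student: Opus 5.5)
The plan is to prove (i) $\Rightarrow$ (ii) $\Rightarrow$ (iii) $\Rightarrow$ (i). The first implication is trivial. Both remaining implications rest on one dictionary. Take a representation $V$ of dimension $\ell$ and a free invariant open $U\subset V$ with complement of large codimension. Then $\CH^G_n(X\times X)=\CH_{n+\ell}((X\times X\times U)/G)$. Over the generic point of $U/G$, with $K=k(U)^G=k(V)^G$, this space restricts to ${}^{T_V}X\times_K{}^{T_V}X$. By the localization sequence, $\CH_n({}^{T_V}X\times{}^{T_V}X)$ is the colimit of $\CH_{n+\ell}$ of the preimages of open subsets of $U/G$. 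The same holds for $\CH^G_0(X)$ versus $\CH_0({}^{T_V}X)$, and for $D\times X$ versus ${}^{T_V}D\times{}^{T_V}X$ (Remark \ref{rema:twist-stab}).

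\textbf{(iii) $\Rightarrow$ (i).} Restrict the equivariant identity \eqref{eqn:diag} to the generic fiber. The class $[\Delta_X]$ restricts to the diagonal of ${}^{T_V}X$, and $[X]\times\zeta$ restricts to $[{}^{T_V}X]\times\zeta_K$. The restriction $\zeta_K$ has degree $1$, because $\deg_0^G$ is compatible with restriction to a fiber and $\CH_0^G=\bZ$. The class $\gamma$ goes to a class supported on ${}^{T_V}D\times{}^{T_V}X$. This gives integral decomposition of the diagonal for the single twist by $T_V$. Since $\bP(V)$ has versal generically free action and $T_V=T_{\bP(V)}$, we have reduced to proving (ii) $\Rightarrow$ (i).

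\textbf{(ii) $\Rightarrow$ (i).} We follow the proof of Theorem \ref{thm.onetorsorXY}. Spread the decomposition over an invariant dense open $U\subset Z$: cycles on $(X\times X\times U)/G$ give the identity in $\CH_{n+\dim Z}((X\times X\times U)/G)$, possibly after shrinking $U$. Given a torsor $L/K$, versality gives an equivariant $\Spec(L)\to U$, hence a $K$-point $u$ of $U/G$. The fiber over $u$ is ${}^TX\times{}^TX$. We pull the identity back along this point by refined Gysin pullback. This is legitimate because we may shrink $U$ so that $U/G$ is smooth, making $u\to U/G$ a regular embedding, and $(X\times X\times U)/G\to U/G$ is flat. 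The pullback preserves the diagonal, the product with the degree-one class $\zeta$, and support on ${}^TD\times{}^TX$.

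\textbf{(i) $\Rightarrow$ (iii).} Take the twist by $T_V$. Its decomposition involves cycles on the generic fiber, which spread out over an open $U'\subset U$ with $U'/G$ open in $U/G$. The resulting identity holds in $\CH_{n+\ell}((X\times X\times U')/G)$, and we must lift it to $(X\times X\times U)/G$. The difference of the two sides, extended by closures, is supported over $(X\times X\times S)/G$ with $S=U\setminus U'$. The plan is to enlarge $D$ if needed, and to remove the error term by showing it vanishes in $\CH^G_n(X\times X)$. This is where Lemma \ref{lem:affinebundle} enters: equivariant classes may be computed on $X\times X\times V$ (Lemma \ref{lemma:G-invariantrepresentability}), and cycles pushed forward from $X\times X\times S$ with $S\subsetneq V$ a proper invariant closed subset die. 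Concretely, I will work with the $G$-equivariant Chow group of $X\times X\times V$, identified with $\CH^G_n(X\times X)$ by homotopy invariance. The lemma is applied after descending to the quotient by the free action on the generic part, or by applying it to $(X\times X)\times S$ for $X\times X$ over $k$ with the $G$-averaging.

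Similarly $\zeta_K$ of degree $1$ must lift to some $\zeta\in\CH^G_0(X)$ of degree $1$. For this I take the closure of its representing cycle in $X\times V$. The degree is preserved because degree is computed at the generic point.

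The main obstacle is this last spreading step. Specifically, it is showing that the error term supported over the proper closed subset $S\subset V$ vanishes in equivariant Chow groups, which requires making the $G$-equivariant version of Lemma \ref{lem:affinebundle} precise.
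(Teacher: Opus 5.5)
The gap is in (i) $\Rightarrow$ (iii), and the claim you propose to prove there is false. For (ii) $\Rightarrow$ (i), and for getting from (iii) to a decomposition on ${}^{T_V}X$, your argument is essentially the paper's, up to two slips:
\begin{itemize}
\item $T_V$ and $T_{\bP(V)}$ are torsors over different fields ($k(V)^G$ versus $k(\bP(V))^G$). Simply take $Z=V$ with $V$ faithful, which is generically free and versal, as the paper does.
\item $\Spec(K)\to U/G$ is not an embedding. Extend scalars to $K$ first and pull back along the resulting $K$-point of the smooth $K$-variety $(U/G)_K$. Alternatively, as in the paper, pull back to the closure of the image and pass to its generic point.
\end{itemize}

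Pushforward from $X\times X\times S$, for a proper invariant closed $S\subsetneq V$, does \emph{not} vanish in $\CH^G_n(X\times X)$ in general. Here is an example:
\begin{itemize}
\item Let $G=\bZ/2\bZ$ act on $X=\bP^1$ by $x\mapsto -x$.
\item Let $V$ be a sum of copies of the sign character $\chi$, and $S$ a coordinate hyperplane.
\item The pushforward of $[X\times X\times S]$ is $c_1^G(\chi)\cap[X\times X]\in\CH^G_1(X\times X)$.
\item Its Gysin restriction to the fixed point $(0,0)$ is the generator of $\CH^G_{-1}\cong\bZ/2\bZ$, so it is nonzero.
\end{itemize}
So there is no equivariant version of Lemma~\ref{lem:affinebundle} of the kind you want. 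The lemma already fails for a nontrivial vector bundle over a base, since pushing forward the zero section gives a top Chern class. It is genuinely a statement over a field, which rules out restricting to the free locus as a fix. $G$-averaging is also unavailable with integral coefficients. Note too that you never use the hypothesis that $G$ acts generically freely on $X$; this step is exactly where it enters.

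The missing idea is that the error term $\delta$ is not killed but absorbed into $\gamma$ by enlarging $D$. In the example above, $c_1^G(\chi)\cap[X\times X]=([0]-[\infty])\times[X]$ is indeed supported on $\{0,\infty\}\times X$. The paper's mechanism is as follows:
\begin{itemize}
\item Since $G$ acts generically freely on the \emph{first} factor, base change $(X\times X\times V)/G$ along $\Spec(K)\to X/G$ via the first projection, with $K=k(X)^G$.
\item This gives ${}^{T_X}X\times_K{}^{T_X}V\cong{}^{T_X}X\times\bA^\ell_K$ (Example~\ref{exa.Vtwist}), and $W=V\setminus Y$ becomes a closed $S\subsetneq\bA^\ell_K$.
\item Apply Lemma~\ref{lem:affinebundle} over the field $K$, with the twisted second factor ${}^{T_X}X$ in the role of its $X$; this kills $\delta$ there.
\item Spread this rational equivalence over a dense open of $X/G$. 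Then $\delta$ is rationally equivalent to some $\gamma'$ supported on $D'\times X\times V$, for an invariant closed $D'\subsetneq X$.
\item Replacing $\gamma$ by $\gamma+\gamma'$, supported on $(D\cup D')\times X$, gives the equivariant decomposition.
\end{itemize}
Localizing only in the first factor is essential. Working at the generic point of $(X\times X)/G$ would leave an error supported on some proper closed subset of $X\times X$, not necessarily of the form $D'\times X$.

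Your lifting of $\zeta$ by closures of multisections is fine.
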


\begin{proof}
The implication (i) $\Rightarrow$ (ii) is trivial.
We show (ii) $\Rightarrow$ (i).
Let $d$ denote the dimension of $Z$.
An integral decomposition of the diagonal determines, by spreading out, an expression in $\mathrm{CH}_{n+d}((X\times X\times Y)/G)$
\begin{equation}
\label{eqn.inXXY}
[(\Delta_X\times Y)/G]=\sum_i a_i [(X\times Y_i)/G]+\gamma,
\end{equation}
for some dense invariant open $Y\subset Z$,
without loss of generality contained in the nonsingular locus of $Z$, integers $a_i$,
invariant multisections $Y_i\subset X\times Y$, flat over $Y$, of the projection map $X\times Y\to Y$, of respective degrees $d_i$, with $\sum a_i d_i=1$, and class $\gamma$ of an invariant cycle supported on $D\times X\times Y$, for some $G$-invariant closed $D\subsetneq X$.
Given a torsor $T\colon L/K$, we take $G$-equivariant $\Spec(L)\to Y$,  say dominant over $G$-invariant closed $Y'\subset Y$ of codimension $c$.
The inclusion gives rise to a Gysin pullback, hence an analogous expression in $\mathrm{CH}_{n+d-c}((X\times X\times Y')/G)$.
We restrict over the generic point of $Y'/G$ and extend scalars to obtain an integral decomposition of the diagonal on ${}^TX$.

Taking $Z=V$, a $G$-representation of dimension $\ell$ with $G$-invariant open $U\subset V$ with free $G$-action, $\codim(V\setminus U)>n$, gives (iii) $\Rightarrow$ (ii).
We complete the proof by showing (i) $\Rightarrow$ (iii).
By (i), we have integral decomposition of the diagonal for the twist by torsor $T_V$, where we are taking $Z=V$.
Spreading out, we may suppose that we have an expression
\eqref{eqn.inXXY}.
Let $W=V\setminus Y$.
Taking closures, from \eqref{eqn.inXXY} we get
\[ [(\Delta_X\times V)/G]=\sum a_i[(X\times \overline{Y}_i)/G]+\gamma+\delta, \]
with $\gamma$ supported on $D\times X\times V$ and
$\delta$ supported on $X\times X\times W$.
Let $T=T_X$, the torsor $L/K$ with $L=k(X)$ and $K=k(X)^G$.
With base change by $\Spec(K)\to X/G$ and
an identification ${}^{T_X}V\cong\bA^\ell_K$,
yielding from $W$ closed $S\subsetneq \bA^\ell_K$,
we obtain from $\delta$ a cycle class,
supported on ${}^{T_X}X \times S$.
By Lemma \ref{lem:affinebundle} this is rationally equivalent to $0$ on ${}^{T_X}X\times \bA^\ell_K$.
Spreading out, we obtain a rational equivalence on $(X\times X\times V)/G$ between $\delta$ and $\gamma'$ for a class $\gamma'$ of an invariant cycle supported on $D'\times X\times V$, for some invariant closed $D'\subsetneq X$.
Thus, $X$ satisfies $G$-equivariant integral decomposition of the diagonal.
\end{proof}

\begin{rema}
\label{rem:choicezeta}
If the $G$-action on $X$ is generically free and versal, then Definition \ref{defn:diag} may be equivalently stated with $\zeta$ represented by an invariant $\widetilde{U}\subset X\times U$,
projecting birationally to $U$, with $\widetilde{U}\not\subset X^{\mathrm{sing}}\times U$.
Indeed, versality implies density of rational points on any twist ${}^TX$.
In the portion of the proof with $T=T_V$ and $Z=V$, over $K=k(V)^G$ we use the observation that integral decomposition of the diagonal may be equivalently stated with a smooth $K$-point in the role of $\zeta$.
We carry out the proof of (i) $\Rightarrow$ (iii) in Theorem~\ref{thm.idodtwist} with just a single index $i=1$, an equivariant section $Y_1$, and $a_1=1$, to get what is claimed, with $\widetilde{U}=(X\times U)\cap\overline{Y}_1$.
\end{rema}

As is well known (cf.\ \cite[Sect.\ 3]{saltmanretract} and \cite[Lemma 1.5]{CTP}), stably rational varieties admit an integral decomposition of the diagonal.
Here we give an analogous result in equivariant birational geometry.

\begin{theo}
\label{theo:stablelinearizable}
Let $G$ be a finite group and $X$ a smooth projective variety with a generically free $G$-action. If the action is stably linearizable then $X$ admits a $G$-equivariant integral decomposition of the diagonal.
\end{theo}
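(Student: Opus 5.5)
The plan is to reduce to the non-equivariant statement over nonclosed fields, using Theorem~\ref{thm.idodtwist} together with the Duncan--Reichstein characterization in Proposition~\ref{prop.DR}(iv). The action on $X$ is stably linearizable, so Proposition~\ref{prop.DR}(iv) says that for every torsor $T\colon L/K$ the twist ${}^TX$ is stably rational over $K$. Since $X$ is smooth and projective, ${}^TX$ is smooth and projective over $K$, because smoothness and projectivity descend along the finite \'etale cover $\Spec(L)\to\Spec(K)$. I will use quasi-projectivity of $X$ to guarantee that the quotient defining the twist exists.

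Next I apply the classical fact, \cite[Sect.\ 3]{saltmanretract} and \cite[Lemma 1.5]{CTP}: a smooth projective stably rational variety over a field $K$ admits an integral decomposition of the diagonal. For completeness I sketch the argument. Integral decomposition of the diagonal (universal $\CH_0$-triviality) is a stable birational invariant among smooth projective varieties. It holds for $\bP^N_K$. It passes from $Y\times\bP^m$ to $Y$, by pushing forward along the projection, or by restricting along a section $Y\times\{pt\}$ using a rational point of $\bP^m$. Birational invariance comes from the standard correspondence argument: a birational map $Y\dashrightarrow Y'$ between smooth projective varieties induces isomorphisms on $\CH_0$ of all base changes to field extensions, and these isomorphisms respect degree. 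Universal $\CH_0$-triviality for $Y_F$, for all $F/K$, is equivalent to integral decomposition of the diagonal, by the Bloch--Srinivas argument applied to the generic point $F=K(Y)$.

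This establishes condition (i) of Theorem~\ref{thm.idodtwist}. The hypotheses of that theorem are satisfied: $X$ is an irreducible projective $G$-variety with generically free $G$-action. Hence (iii) holds, i.e., $X$ has $G$-equivariant integral decomposition of the diagonal.

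The main obstacle is the non-equivariant input over an arbitrary field $K$ of characteristic zero, namely that stable rationality implies integral decomposition of the diagonal. This must be applied to the twists, which are varieties over nonclosed fields, so I have to check that the cited lemma holds over such $K$, in particular that the smoothness used in the moving and correspondence arguments is available. Since $\mathrm{char}(K)=0$ and ${}^TX$ is smooth, this is fine. A direct alternative would avoid the twist entirely: transport an explicit decomposition of the diagonal through a $G$-birational map $X\times\bP(U)\dashrightarrow\bP(V)$, using Remark~\ref{rem.SL} to arrange generically free factors. That route would require an equivariant version of the correspondence argument, so I would take the twist route through Theorem~\ref{thm.idodtwist} instead.
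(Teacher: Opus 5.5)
Your proposal is correct and follows the paper's own proof: Proposition~\ref{prop.DR}(iv) makes every twist ${}^TX$ stably rational, the classical Saltman/Colliot-Th\'el\`ene--Pirutka fact then gives each twist an integral decomposition of the diagonal, and Theorem~\ref{thm.idodtwist}, (i)$\Rightarrow$(iii), concludes. Your checks that the twists are smooth and projective, and your sketch of the classical fact over nonclosed $K$, only spell out details the paper leaves implicit.
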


\begin{proof}
Theorem~\ref{thm.idodtwist} reduces the statement to stable rationality of twists of $X$.
\end{proof}

The following is analogous to \cite[Thm.\ 2.1]{Voisin15} and \cite[Thm.\ 2.3]{CTP}.

\begin{theo}
\label{theo:specialization}
Let $G$ be a finite group.
Let $\pi : \mathcal X \to B$ be a smooth proper family of irreducible $G$-varieties over a smooth curve $B$ such that $G$ acts on $\mathcal X$ and $\pi$ is $G$-invariant.
Suppose that the general fiber $\mathcal X_b$ has a $G$-equivariant integral decomposition of the diagonal.
Then a special fiber $\mathcal X_{b_0}$ has a  $G$-equivariant integral decomposition of the diagonal.
\end{theo}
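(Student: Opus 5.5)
The plan is to work with the finite-dimensional approximations of the equivariant Chow groups and reduce to the classical specialization argument of Voisin and Colliot-Thélène--Pirutka for a family over a curve. We fix a $G$-representation $V$ of dimension $\ell$ and an invariant open $U\subset V$ with free $G$-action and $\codim(V\setminus U)>2n$, where $n$ is the relative dimension. Then, for every $b$, $\CH^G_n(\mathcal X_b\times\mathcal X_b)=\CH_{n+\ell}((\mathcal X_b\times\mathcal X_b\times U)/G)$ and $\CH^G_0(\mathcal X_b)=\CH_\ell((\mathcal X_b\times U)/G)$. Consider the family $\mathcal Y:=(\mathcal X\times_B\mathcal X\times U)/G\to B$, which is flat, with fibers $(\mathcal X_b\times\mathcal X_b\times U)/G$; it is smooth over $B$ because $G$ acts freely on $U$ and $\pi$ is smooth and $G$-invariant. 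Likewise consider $(\mathcal X\times U)/G\to B$. Fulton's specialization homomorphism for families over a smooth curve then gives maps $\sigma\colon\CH_*(\mathcal Y_{\eta})\to\CH_*(\mathcal Y_{b_0})$. These maps are compatible with proper pushforward, with flat pullback, and with exterior products. In particular they send $[\Delta]$ to $[\Delta]$, send $[X]\times\zeta$ to $[X]\times\sigma(\zeta)$, and preserve the degree $\deg_0^G$.

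The steps are as follows.

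\emph{Step one: pass from the general fiber to the generic fiber.} The decomposition holds for a general $b$. Choosing a very general point, or an isomorphism of $\overline{k(B)}$ with a field containing $k(b)$ as in Voisin, we obtain the decomposition over the geometric generic fiber. A standard spreading-out argument then yields it after a finite base change $B'\to B$. This base change is harmless: we pull the family back to $B'$, and $b_0$ has preimages with the same fiber. Here $D$ spreads to an invariant closed $\mathcal D\subset\mathcal X$ not containing any fiber generically.

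\emph{Step two: specialize each term.} We apply $\sigma$ to the equality $[\Delta]=[X]\times\zeta+\gamma$ in $\CH_{n+\ell}(\mathcal Y_\eta)$. The diagonal and product terms specialize as noted above.

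\emph{Step three: control the support of $\gamma$.} The class $\gamma$ is pushed forward from $(\mathcal D_\eta\times\mathcal X_\eta\times U)/G$, and specialization commutes with proper pushforward. Hence $\sigma(\gamma)$ is pushed forward from the specialization of the closure $\overline{\mathcal D}$ intersected with the fiber over $b_0$. This intersection, $D_0\times\mathcal X_{b_0}$, is invariant, and $D_0$ is a proper closed subset because $\overline{\mathcal D}$ is flat over $B$ of relative dimension less than $n$ (after taking the closure of $\mathcal D_\eta$). This gives the required form in $\CH^G_n(\mathcal X_{b_0}\times\mathcal X_{b_0})$ with $\zeta_0=\sigma(\zeta)$, which has $\deg^G_0(\zeta_0)=1$.

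The main obstacle is the step from the general fiber to the generic fiber in the equivariant setting. A decomposition on a single fiber $\mathcal X_b$ involves cycles on $(\mathcal X_b\times\mathcal X_b\times U)/G$, and we must ensure that the very-general-point argument works for this quasi-projective variety. I would handle it as Voisin does, via countability of relative Hilbert schemes of the quasi-projective family $\mathcal Y$: the set of $b$ admitting a decomposition is a countable union of closed subsets, so if it contains a general point it contains the generic point after finite base change. Alternatively, I would apply Theorem~\ref{thm.idodtwist} to twists by $T_V$ over the function field and reduce to the nonequivariant statement \cite[Thm.\ 2.3]{CTP} for the family of twists ${}^{T_V}\mathcal X_b$ over the base $B_{K}$, $K=k(V)^G$. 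This reduction needs generically free actions on the fibers, and I would verify that condition separately.
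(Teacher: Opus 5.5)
Your argument is correct, but it follows a different route from the paper. The paper's proof is a one-line reduction, essentially the alternative you sketch at the end:
\begin{itemize}
\item By Theorem~\ref{thm.idodtwist}, $G$-equivariant integral decomposition of the diagonal of a fiber is traded for ordinary integral decomposition of the diagonal of its twist by $T_V$.
\item The nonequivariant specialization theorem \cite[Thm.\ 1.12]{CTP} is then applied to the resulting family of twists.
\end{itemize}

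You instead specialize directly on the Edidin--Graham approximation spaces $(\mathcal X\times_B\mathcal X\times U)/G\to B$ and $(\mathcal X\times U)/G\to B$, using Fulton's specialization homomorphism. This works because the family is smooth:
\begin{itemize}
\item The closure of the generic relative diagonal is $(\Delta_{\mathcal X/B}\times U)/G$, which is flat with reduced special fiber.
\item The term $[X]\times\zeta$ is a flat pullback along the smooth projection $(\mathcal X\times_B\mathcal X\times U)/G\to(\mathcal X\times U)/G$.
\item $\deg^G_0$ is a proper pushforward to $(U/G)\times B$.
\item The closure of $D$ is flat over $B$ with special fiber of dimension $<n$.
\end{itemize}

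What your route buys is that it is self-contained and does not need generic freeness. Theorem~\ref{thm.idodtwist} assumes a generically free action, and its direction (i)$\Rightarrow$(iii), which the paper's argument must use on the special fiber, genuinely uses the torsor $T_X$. The specialization statement contains no such hypothesis; for instance, a trivial action on every fiber is allowed. So the step in your alternative where you would ``verify'' generic freeness separately cannot be carried out: it is an extra assumption. Your direct route is what actually proves the statement as written.

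What the paper's route buys is brevity and wholesale reuse of the nonequivariant specialization machinery of \cite{CTP}. That machinery is designed to handle more general degenerations than the smooth families considered here.

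The passage from a (very) general fiber to the generic fiber after a finite base change, which you flag as the main obstacle, is handled adequately by your Voisin-style countability or abstract-field-isomorphism argument. The paper leaves this step implicit. Both versions need ``general'' to be read as ``very general'' (or $k$ to be uncountable).
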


\begin{proof}
This follows from \cite[Thm.\ 1.12]{CTP} combined with Theorem~\ref{thm.idodtwist}.
\end{proof}

\section{Cohomological invariants}
\label{sect:coho}

Cohomology, and in particular, the Leray spectral sequence associated to the morphism of stacks
$$
[X/G]\to BG,
$$
supplies equivariant birational invariants of $G$-actions. In this section, we investigate the behavior of these under passage to nonclosed fields, via the twisting construction in Section~\ref{sect:gen}.

\subsection*{Universal torsor obstruction and Amitsur invariants}
We recall the definition of the Amitsur invariant
$$
\Am^2(X,G):=\Pic(X)^G/\Pic([X/G])
$$
in equivariant geometry; see, e.g., \cite{BCDP}.
This group captures obstructions to the $G$-linearization of $G$-invariant classes in $\Pic(X)$.
When $X$ is a smooth projective irreducible $G$-variety, $\Am^2(X,G)$ can be expressed as the image of the
homomorphism
\[ \delta_2\colon \Pic(X)^G\to \rH^2(G,k^\times) \]
from the Leray spectral sequence.
Higher Amitsur invariants $\Am^d(X,G)$ ($d\ge 2$) are defined similarly, as the images of homomorphisms
\[ 
\delta_d\colon \rH^{d-2}(G,\Pic(X))\to \rH^d(G,k^\times).
\]
Assume that $\Pic(X)$ is $\bZ$-free and finitely generated. 
A similar spectral sequence gives rise to an exact sequence \cite[(3.1)]{KT-uni}
\begin{equation}
\label{eqn.TNSseq}
\rH^1_G(X,T_{\mathrm{NS}})\to 
\End(\Pic(X))^G\stackrel{\partial}\to \rH^2(G,T_{\mathrm{NS}}(k))\to \rH^2_G(X,T_{\mathrm{NS}}),
\end{equation}
where $T_{\mathrm{NS}}$ denotes the N\'eron-Severi torus,
and we have the {\em universal torsor obstruction}
\[ \beta(X\actsfromright G):=\partial(1_{\Pic(X)}). \]

The Amitsur invariants $\Am^d(X,G)$ ($d\ge 2$) and the vanishing of the universal torsor obstruction $\beta(X\actsfromright G)$ are stable $G$-birational invariants of $X$.
As well, $\beta(X\actsfromright G)=0$ implies $\Am^d(X,G)=0$ for all $d\ge 2$ \cite[Thm.\ 1.2]{STZ}.

\begin{prop}
\label{prop.Am2obstructsWV}
Let $X$ be a smooth projective irreducible $G$-variety, such that the
$G$-action is weakly versal.
Then:
\begin{itemize}
\item[(i)] $\Am^d(X,G)=0$, for all $d\ge 2$.
\item[(ii)] If $\Pic(X)$ is $\bZ$-free and finitely generated, then $\beta(X\actsfromright G)=0$.
\end{itemize}
\end{prop}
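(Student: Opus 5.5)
The strategy is to use weak versality to produce a $G$-rational map $\bP(W)\dashrightarrow X$, with $W$ a representation on which $G$ acts generically freely (Proposition \ref{prop.WV}). The point is to recognize these invariants as obstructions that vanish as soon as $X$ receives an equivariant map from a space whose invariants vanish. I would argue with the stack morphisms $[X/G]\to BG$ and $[\bP(W)/G]\to BG$.

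First step: replace the rational map by a morphism. Resolving indeterminacy equivariantly gives a smooth projective $G$-variety $\widetilde{P}$, $G$-birational to $\bP(W)$, with an equivariant morphism $f\colon\widetilde{P}\to X$. Alternatively, one can restrict to a $G$-invariant dense open $U\subset\bP(W)$ with free action where the map is defined. The homomorphisms $\delta_d\colon \rH^{d-2}(G,\Pic(X))\to \rH^d(G,k^\times)$ and $\partial$ come from Leray spectral sequences of $[X/G]\to BG$. These are functorial under $f$: pulling back along $f$ gives a compatible map of spectral sequences, which is the identity on the base terms $\rH^d(G,k^\times)$ and $\rH^2(G,T_{\mathrm{NS}}(k))$. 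Hence $\delta_d^X(\alpha)=\delta_d^{\widetilde{P}}(f^*\alpha)$. So it suffices that the corresponding differentials, or edge maps to $\rH^d_G$, vanish on $\widetilde{P}$.

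Second step: show the invariants vanish on $\widetilde{P}$, or more directly that $\rH^d(G,k^\times)\to \rH^d_G(\widetilde{P},\mathbb{G}_m)$ is injective. Rather than using $\widetilde{P}$, I would use a smaller, cleaner source. Weak versality of $X$ applied to the torsor $T_W$ gives a $G$-equivariant $\Spec(k(W))\to X$. Then $\rH^d(G,k^\times)\to \rH^d_G(X,\mathbb{G}_m)$ followed by pullback to $\Spec(k(W))$ lands in $\rH^d(G,k(W)^\times)$. The $k^\times\to k(W)^\times$ component is split, via evaluation or leading-coefficient maps, after restricting to the linear structure. Concretely, $k(W)^\times/k^\times$ is the group of divisors' principal part and $\rH^d(G,k(W)^\times)\supset \rH^d(G,k^\times)$ as a direct summand; equivalently, one uses a $G$-fixed point on $W$, namely $0$, to split. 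Injectivity of $\rH^d(G,k^\times)\to\rH^d_G(X,\mathbb{G}_m)$ forces the image of $\delta_d$ to be zero, since $\delta_d$ lands in the kernel of that map by exactness of the five-term-type sequence. This gives (i). For (ii), the same argument with $T_{\mathrm{NS}}$ coefficients shows $\rH^2(G,T_{\mathrm{NS}}(k))\to \rH^2_G(X,T_{\mathrm{NS}})$ is injective, since $T_{\mathrm{NS}}\cong\mathbb{G}_m^r$ over $k$. Exactness of \eqref{eqn.TNSseq} then gives $\partial=0$, so $\beta(X\actsfromright G)=0$.

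I expect the main obstacle to be the splitting/injectivity claim for $\rH^d(G,k^\times)\to\rH^d(G,k(W)^\times)$. I would try first to reduce to the equivariant morphism $U\to X$ extended across the origin of the cone, or to $\bP(W)$ itself where $\Pic$ is $\bZ$ with $G$-linearized $\cO(1)$. For a representation $W$, $\rH^*_G(W,\mathbb{G}_m)$ contains $\rH^*(G,k^\times)$ via the fixed point $0$. The issue is that the equivariant map is only defined generically. To handle that, I would pass through the valuation along a $G$-stable divisor/discrete valuation at $0$ (the order of vanishing at the origin), obtaining a $G$-equivariant retraction $k(W)^\times\to k^\times\times\bZ$ given by the leading homogeneous form evaluated suitably. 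If that fails, the fallback is the known implication \textbf{(WV)} $\Rightarrow$ \textbf{(A)} together with a direct computation.
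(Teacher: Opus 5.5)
Your Step 1 is fine: the Leray spectral sequences are functorial along an equivariant morphism, and the induced map is the identity on $\rH^d(G,k^\times)$. Step 2 fails, however.

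The inclusion $k^\times\subset k(W)^\times$ is not a $G$-equivariant direct summand. Since $k[W]^\times=k^\times$ and $\Pic(W)=0$, the quotient is the permutation module $\mathrm{Div}(W)$. So the kernel of $\rH^d(G,k^\times)\to\rH^d(G,k(W)^\times)$ is the image of $\rH^{d-1}(G,\mathrm{Div}(W))=\bigoplus_{[D]}\rH^{d-1}(G_D,\bZ)$.

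For $d=2$ this kernel vanishes, so your argument does give $\Am^2(X,G)=0$, but for this reason rather than via a splitting. For $d\ge 3$ it does not vanish. If $G$ is cyclic and $W$ is faithful, then $\rH^3(G,k(W)^\times)\cong\hat{\rH}^1(G,k(W)^\times)=0$ by periodicity and Hilbert 90, whereas $\rH^3(G,k^\times)\cong\bZ/|G|$. Hence no $G$-equivariant retraction $k(W)^\times\to k^\times$ exists in general. Evaluation at $0$ is defined only on units of the local ring at $0$, and the leading-form map lands in homogeneous rational functions, not in $k^\times$. The same computation applies to every $G$-Galois field extension $L$. So for cyclic $G$, pulling back along any equivariant $\Spec(L)\to X$ kills $\rH^3(G,k^\times)$ entirely, and no choice of $W$ or field point rescues the argument (compare the last example of the paper).

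Part (ii) has an additional error. $T_{\mathrm{NS}}(k)=\mathrm{Hom}(\Pic(X),k^\times)$ carries the $G$-action through $\Pic(X)$, so it is not $(k^\times)^r$ as a $G$-module. Even in degree $2$, the kernel of $\rH^2(G,T_{\mathrm{NS}}(k))\to\rH^2(G,T_{\mathrm{NS}}(k(W)))$ is the image of $\bigoplus_{[D]}\rH^1(G_D,\Pic(X)^\vee)$, which need not vanish. For example, take $G=\bZ/2$ acting by $-1$ on $W=\bA^1$ and on a rank-one summand of $\Pic(X)^\vee$. On the corresponding summand ($k^\times$ with $G$ acting by inversion) one has $\rH^2=\hat{\rH}^0=\{\pm1\}$, and $-1$ becomes the norm $x/(-x)$ of $x$ in $k(x)^\times$.

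The missing idea is to keep codimension-one information rather than passing to the generic point. Your remark that the map is ``only defined generically'' is exactly where this enters. Since $X$ is projective and $W$ is normal, the $G$-rational map $W\dashrightarrow X$ provided by weak versality is defined on a $G$-invariant open $U\subset W$ with $\codim(W\setminus U)\ge 2$. Hence $k[U]^\times=k^\times$ and $\Pic(U)=0$.
\begin{itemize}
\item[(i)] Your Step 1 applied to $f\colon U\to X$ gives $\delta_d^X(\alpha)=\delta_d^U(f^*\alpha)=0$ for all $d\ge 2$, because $\rH^{d-2}(G,\Pic(U))=0$.
\item[(ii)] Since $T_{\mathrm{NS}}(U)=T_{\mathrm{NS}}(k)$ and $\rH^1(U,T_{\mathrm{NS}})=0$, the edge map $\rH^2(G,T_{\mathrm{NS}}(k))\to\rH^2_G(U,T_{\mathrm{NS}})$ is injective. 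Exactness of \eqref{eqn.TNSseq} then gives $\beta(X\actsfromright G)=0$.
\end{itemize}
The paper itself does not argue directly; it cites \cite[Thm.\ 5.1]{STZ} for (i) and \cite[Thm.\ 5.3]{KT-uni} for (ii).
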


\begin{proof}
Assertion (i) is immediate from \cite[Thm.\ 5.1]{STZ}.
Assertion (ii) is \cite[Thm.\ 5.3]{KT-uni}, where the stated hypothesis that $X$ is rational can be replaced by the more general condition, that $\Pic(X)$ is $\bZ$-free and finitely generated.
\end{proof}

Let $X$ be a variety over an
extension field $K$ of $k$. 
We denote by $\overline{K}$ an algebraic closure and by $\mathfrak g_K$ the absolute Galois group of $K$.
Then there is a similar, classical,  invariant
$$
\Am^2(X):=\Pic(X_{\overline{K}})^{\mathfrak g_K}/\Pic(X).
$$
In case $X$ is a nonsingular geometrically irreducible projective variety over $K$, we identify $\Am^2(X)$ with the subgroup of the Brauer group $\Br(K)\simeq \rH^2(\mathfrak g_K,\overline{K}^\times)$, that arises as the image of $\Pic(X_{\overline{K}})^{\mathfrak g_K}$ from the analogous spectral sequence.

\subsection*{Negligible cohomology}

We recall the definition and basic properties, following \cite{GM}. A continuous homomorphism 
$$
\rho_K\colon \mathfrak g_K \to G
$$
yields a homomorphism in cohomology
$$
\rho_K^*\colon \rH^d(G,M)\to \rH^d(\mathfrak g_K,M), 
$$
for every $G$-module $M$. The group of {\em negligible} classes
$$
\rH^d(G,M)_{\mathrm{neg}}:=
\{ \alpha \mid \rho_K^*(\alpha)=0, \quad \forall\, K/k\} \subseteq \rH^d(G,M)
$$
consists of classes not detectable in Galois cohomology. In particular, this applies to the twisting construction associated with a quasi-projective $G$-variety $Z$ with generically free $G$-action, in which case there is a natural surjective homomorphism $\rho_Z\colon \mathfrak g_{k(Z)^G}\to G$, arising from the Galois extension $k(Z)/k(Z)^G$. 

The following result strengthens \cite[Prop.\ 2.1]{GM}:

\begin{prop}
\label{prop:gm}
A class $\alpha\in \rH^d(G,M)$ is negligible if and only if
$\rho_Z^*(\alpha)=0$, for some quasi-projective $G$-variety $Z$ over $k$ with generically free versal $G$-action.
\end{prop}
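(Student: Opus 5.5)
The forward implication is immediate: if $\alpha$ is negligible, then $\rho_K^*(\alpha)=0$ for every $K/k$ and every continuous $\rho_K$. In particular this holds for $\rho_Z$, where we may take $Z=\bP(W)$ with $W$ a representation on which $G$ acts generically freely; this action is versal. So the content is the converse. Suppose $\rho_Z^*(\alpha)=0$ for one such $Z$. Let $K/k$ be an extension field and $\rho_K\colon \mathfrak g_K\to G$ a continuous homomorphism. We must show $\rho_K^*(\alpha)=0$. The plan follows the proof of Theorem~\ref{thm.onetorsorP}: spread out the vanishing over an invariant dense open of $Z$, then use versality to specialize to the torsor determined by $\rho_K$.

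\emph{Step 1 (torsors).} The homomorphism $\rho_K$ gives a $G$-torsor $T\colon L/K$, namely $L=\mathrm{Map}_{\mathfrak g_K}(G,\overline{K})$. Then $\rho_K^*$ factors as
\[ \rH^d(G,M)\to \rH^d_{\mathrm{et}}(\Spec K,\underline{M}_T)\to \rH^d(\mathfrak g_K,M), \]
where $\underline{M}_T$ is the twist of the constant sheaf $M$ by $T$. Equivalently, it is pullback along the classifying morphism $\Spec K\to BG$ attached to $T$. Likewise $\rho_Z^*$ is pullback along the composite $\Spec k(Z)^G\to [Z/G]\to BG$, the first map being the generic point of $Z/G$.

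\emph{Step 2 (spreading out).} The class $\alpha$ pulls back to $[Z/G]$. Its restriction to the generic point $\Spec k(Z)^G$ vanishes by hypothesis. Étale cohomology commutes with filtered limits of schemes with affine transition maps, and $\Spec k(Z)^G=\varprojlim U/G$ over invariant dense opens $U\subset Z$ on which $G$ acts freely. Hence there is such a $U$ for which the pullback of $\alpha$ to $\rH^d(U/G,\underline M)=\rH^d_G(U,M)$ is zero.

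\emph{Step 3 (versality).} Since $Z$ is versal, $U$ is weakly versal. So there is a $G$-equivariant morphism $\Spec L\to U$, and passing to quotients gives $\Spec K\to U/G$. This map pulls the $G$-torsor $U\to U/G$ back to $T$. Therefore the classifying map $\Spec K\to BG$ of $T$ factors as $\Spec K\to U/G\to BG$. It follows that $\rho_K^*(\alpha)$ is the pullback of $\alpha|_{U/G}=0$, hence zero.

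I expect the main obstacle to be Step 2, together with the identification in Step 1. Two things must be checked: that Galois cohomology of $K$ with the module $M$ (via $\rho_K$) agrees with étale cohomology of the locally constant sheaf obtained from $T$, and that passage to the limit is legitimate for a possibly non-finitely generated $M$. For $M$ a discrete $G$-module, the limit argument works for étale cohomology of any abelian sheaf on a limit of qcqs schemes. One could also avoid the limit by working directly with cocycles: a cochain witnessing $\rho_Z^*(\alpha)=0$ involves finitely many elements, so it is defined over a finite Galois cover of $k(Z)^G$. That cover spreads out over some $U$, and it specializes via the versal point.
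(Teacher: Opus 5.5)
Your proposal is correct and matches the paper's proof. Both spread the vanishing of $\rho_Z^*(\alpha)$ out to the étale cohomology of the quotient of a dense invariant open with free $G$-action, then use versality to get a $G$-equivariant $\Spec(L)$ into that open, through which $\rho_K^*$ factors. Your additions — exhibiting $Z=\bP(W)$ for the forward direction and justifying the limit step via affine opens of $Z/G$ — just make explicit what the paper leaves implicit.
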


\begin{proof}
The forward implication is trivial.
For the reverse implication, given $\rho_Z^*(\alpha)=0$, there exists dense invariant open $W\subset Z$, such that the $G$-action on $W$ is free, $\rho_Z^*$ factors as
\[ \rH^d(G,M)\to \rH^d(W/G,M)\to \rH^d(\mathfrak g_{k(Z)^G},M), \]
and $\alpha$ maps to $0$ in the \'etale cohomology $\rH^d(W/G,M)$.
Given a torsor $T\colon L/K$, with corresponding $\rho_K\colon \mathfrak g_K \to G$, there exists $G$-equivariant $\Spec(L)\to W$.
Then $\rho_K^*$ also factors through $\rH^d(W/G,M)$, thus
$\rho_K^*(\alpha)=0$.
\end{proof}

\begin{prop}
\label{prop.reproduceAm2}
Let $X$ be a smooth projective irreducible $G$-variety with $\bZ$-free finitely generated $\Pic(X)$, let
$Z$ be a quasi-projective $G$-variety with generically free versal $G$-action.
Then the twisting construction by the associated torsor
$T_Z=k(Z)/k(Z)^G$ over $K=k(Z)^G$ gives rise to an isomorphism
\[
\Am^2(X,G)\eqto \Am^2({}^{T_Z} X).
\]
Furthermore, the homomorphism
\[
\rH^2(G,k^\times)\to \rH^2(\mathfrak g_K,\overline{K}^\times)
\]
is injective and restricts to the above isomorphism of Amitsur groups.
\end{prop}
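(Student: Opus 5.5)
We compare the two Leray spectral sequences: the one for $[X/G]\to BG$, and the one for the twist ${}^{T_Z}X$ over $K=k(Z)^G$, i.e.\ for ${}^{T_Z}X\to\Spec(K)$. The comparison map comes from the natural map $\rho_Z\colon\mathfrak g_K\to G$ together with the pullback $X_{\overline K}\leftarrow X$.

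\emph{Identify the Picard modules.} Since $X$ is smooth projective over the algebraically closed field $k$ of characteristic zero and $\Pic(X)$ is $\bZ$-free and finitely generated, $\Pic^0(X)=0$. Hence $\Pic(X)\to\Pic(X_{\overline K})$ is an isomorphism. Under this isomorphism, the $\mathfrak g_K$-action on $\Pic(({}^{T_Z}X)_{\overline K})=\Pic(X_{\overline K})$ is the $G$-action pulled back by $\rho_Z$. Because $\rho_Z$ is surjective, $\Pic(X_{\overline K})^{\mathfrak g_K}=\Pic(X)^G$.

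\emph{Compatibility of the differentials.} Functoriality of the Hochschild--Serre/Leray spectral sequence gives a commutative square
\[
\begin{CD}
\Pic(X)^G @>\delta_2>> \rH^2(G,k^\times)\\
@| @VV{\rho_Z^*}V\\
\Pic(X_{\overline K})^{\mathfrak g_K} @>\delta_2>> \rH^2(\mathfrak g_K,\overline K^\times).
\end{CD}
\]
Here the right vertical map is inflation along $\rho_Z$ composed with $k^\times\subset\overline K^\times$. So $\rho_Z^*$ sends $\Am^2(X,G)=\mathrm{im}(\delta_2)$ onto $\Am^2({}^{T_Z}X)$. The second statement, injectivity of $\rho_Z^*$ on $\rH^2(G,k^\times)$, therefore implies the first.

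\emph{Injectivity, the main step.} Suppose $\alpha\in\rH^2(G,k^\times)$ has $\rho_Z^*(\alpha)=0$ in $\Br(K)$. By the argument of Proposition~\ref{prop:gm} (spreading out over a dense open $W\subset Z$ with free action, then using versality to map any torsor $\Spec(L)\to W$), $\alpha$ dies under every $\rho_{K'}$. So it suffices to find a single torsor on which $\alpha$ is detected.

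Coefficients are $k^\times$ versus $\overline{K}^\times$, so this is not literally Proposition~\ref{prop:gm} with $M=k^\times$. I would handle it by choosing $Z=V$, a faithful representation. Then $\rH^2(G,k^\times)\to\Br(k(V)^G)$ is injective: the class $\alpha$ corresponds to a projective representation, equivalently a central extension $\widetilde G$. Its image in $\Br(k(V)^G)$ is the obstruction to lifting the $G$-action on $\bP(V')$ to $V'$ over the function field, i.e.\ the class of a twisted form of a projective space. That class vanishes only if $\alpha=0$. The cleanest way I see to prove this is the no-name lemma together with $\Br(V/G)$ computations: $\rH^2(G,k^\times)=\Br$ of the stack $BG$, and since $V\setminus U$ has high codimension, $\Br([U/G])=\Br(BG)$ injects into $\Br$ of the generic point by purity for smooth varieties.

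To pass to a general versal $Z$, take $\Spec(k(Z))\to V$ equivariantly, which exists since $V$ is versal. Then $\rho_V^*$ factors through $\rho_Z^*$, so injectivity for $V$ gives injectivity for $Z$. I expect the purity and Brauer-injectivity step to be the main obstacle. Once it holds, the square above yields the isomorphism $\Am^2(X,G)\eqto\Am^2({}^{T_Z}X)$.
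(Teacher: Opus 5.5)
Your overall strategy is sound and differs from the paper's. However, your last paragraph argues in the wrong direction and should be removed.

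\textbf{The faulty final step.} Versality of $V$ gives an equivariant $\Spec(k(Z))\to U\subset V$. This makes $\rho_Z^*$ factor through $\rH^2(U/G,\mathbb{G}_m)$, or through $\rho_V^*$ if the map is dominant. That yields only $\ker\rho_V^*\subseteq\ker\rho_Z^*$, which is the opposite of what you need. That paragraph also never uses versality of $Z$, and this hypothesis cannot be dropped. For $\mathfrak{K}_4$ acting faithfully on $\bP^1$ one gets $K\cong k(t)$ with $\Br(K)=0$, whereas $\rH^2(\mathfrak{K}_4,k^\times)\cong\bZ/2\bZ$. Yet your last paragraph would apply verbatim to this $Z$.

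\textbf{The correct transfer.} You already gave it at the start of your injectivity step. If $\rho_Z^*(\alpha)=0$, then $\alpha$ vanishes in $\rH^2(W/G,\mathbb{G}_m)$ for some invariant dense open $W\subset Z$ with free action. Versality of $Z$ then supplies an equivariant $\Spec(k(V))\to W$, whence $\rho_V^*(\alpha)=0$.

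\textbf{The case $Z=V$ is easier than you expect.} Choose $V$ with $\codim(V\setminus U)\ge 2$. Then $\cO(U)^\times=k^\times$ and $\Pic(U)=0$. So the Hochschild--Serre sequence for $U\to U/G$ gives $\rH^2(G,k^\times)\hookrightarrow\rH^2(U/G,\mathbb{G}_m)$. The latter injects into $\Br(K)$ because $U/G$ is regular and integral. No homotopy invariance or purity for Brauer groups is needed.

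\textbf{Comparison with the paper.} The paper proves injectivity on $\rH^2$ directly for the given $Z$. It uses $\Am^2(Z,G)=0$ (Proposition~\ref{prop.Am2obstructsWV}), injectivity of $\Br([Z/G])\to\Br([W/G])$, and the embedding $\rH^2(G,k(Z)^\times)\subset\Br(K)$. It identifies Picard groups through a limit over $W$ and the decomposition $\Pic(X\times T)\cong\Pic(X)\oplus\Pic(T)$. It then proves the Amitsur isomorphism separately: first for $Z=V$, via the Edidin--Graham identification of $\Pic([X/G])$ with an equivariant Chow group, and then for general $Z$, via a second versality argument through $\Spec(k(V))\to W$.

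Your square of $\delta_2$'s, together with $\Pic(X)^G=\Pic(X_{\overline K})^{\mathfrak g_K}$, makes both surjectivity and injectivity of the Amitsur map formal consequences of injectivity on $\rH^2$. So your route is shorter and avoids equivariant Chow groups and the vanishing theorem for weakly versal actions. The paper's route obtains injectivity on $\rH^2$ without passing through a representation, at the cost of those heavier inputs.
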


\begin{proof}
The hypotheses imply $\rH^1(X,\cO_X)=0$ and thus 
$$
\Pic(X\times T)\cong \Pic(X)\oplus \Pic(T),
$$ 
for any finite-type $k$-scheme $T$.
A standard limit arguments for coherent sheaves \cite[(8.5.2)]{EGAIV}
identifies $\Pic({}^{T_Z}X)$ with
$$
\varinjlim \Pic((X\times W)/G),
$$
where the limit is over
nonempty invariant affine open $W\subset Z$.
A similar treatment of $\Pic(X_{k(Z)})$, in combination with the expression for Picard group of a product with $X$, gives an equivariant isomorphism
\begin{equation}
\label{eqn.PicXkZ}
\Pic(X)\eqto \Pic(X_{k(Z)}).
\end{equation}
Since group cohomology commutes with direct limits \cite[Prop.\ I.8]{SerreCG},
we may identify $\rH^2(G,k(Z)^\times)$ with
$\varinjlim \rH^2(G,k[W]^\times)$.
With this, we deduce that $\rH^2(G,k^\times)\to \rH^2(G,k(Z)^\times)$ is injective by combining the vanishing of $\Am^2(Z,G)$ (Proposition \ref{prop.Am2obstructsWV}) with
functoriality of the Leray spectral sequence and \cite[Prop.\ 2.5 (iv)]{ant}, which gives injectivity of $\Br([Z/G]\to \Br([W/G])$.
Now $\rH^2(G,k(Z)^\times)$ is identified with the subgroup of
$\Br(K)\simeq\rH^2(\mathfrak g_K,\overline{K}^\times)$,
of classes, trivialized by restriction to $k(Z)$;
cf., e.g., \cite[Thm.\ 1.3.5]{CTSkoro}.

We establish the isomorphism of Amitsur groups, first, in the case that
$Z=V$, a linear representation with invariant open $U\subset V$ on which $G$ acts freely and complement of $U$ in $V$ of codimension $\ge 2$.
Let $n$ denote the dimension of $X$, and $\ell$ the dimension of the representation $V$.
By \cite[Thm.\ 1]{EG98}, we have
\[ \Pic([X/G])\cong \CH^G_{n-1}(X)\cong \CH_{n+\ell-1}((X\times U)/G). \]
We have equivariant identifications of $\Pic(X)\cong \CH_{n-1}(X)$, first, with $\CH_{n+\ell-1}(X\times V)$, then with $\Pic(X\times U)\cong \CH_{n+\ell-1}(X\times U)$.
In the commutative diagram
\[
\xymatrix{
\Pic([X/G])\ar[r]\ar[d]^\sim &
\Pic(X)^G \ar[r]\ar[d]^\sim &
\rH^2(G,k^\times) \ar@{=}[d] \\
\Pic((X\times U)/G)\ar[r]\ar[d] & 
\Pic(X\times U)^G\ar[r]\ar[d]^\sim &
\rH^2(G,k^\times)\ar@{^{(}->}[d] \\
\Pic({}^{T_V}X)\ar[r] & 
\Pic(X_{k(V)})^G\ar[r] &
\rH^2(G,k(V)^\times)
}
\]
the bottom-left vertical map is surjective.
So we have the isomorphism of Amitsur groups, compatible with the injective map on $\rH^2$.

Now we treat the case of general $Z$.
Basic functoriality gives the homomorphism of Amitsur groups, compatible with homomorphism on $\rH^2$, and the latter is injective.
By the isomorphism \eqref{eqn.PicXkZ}, the homomorphism of
Amitsur groups is surjective.
Now suppose we have a line bundle $\cL$ on $X$ with $G$-invariant class in $\Pic(X)$, such that the class of $\cL_{k(Z)}$ on $X_{k(Z)}$
lifts to $\Pic({}^{T_Z}X)$, i.e., to
$\Pic((X\times W)/G)$ for some
nonempty invariant affine open $W\subset Z$.
By versality there exists an equivariant morphism
$\Spec(k(V))\to W$.
By a similar diagram as above, with
$\Pic((X\times W)/G)$ and $\Pic(X\times W)^G$ in the middle row,
and the established isomorphism $\Am^2(X,G)\cong \Am^2({}^{T_V}X)$,
we obtain the vanishing of the class of $\cL$ in $\Am^2(X,G)$.
\end{proof}

\begin{exam} 
An analogous statement for higher Amitsur groups fails.
There exist generically free regular actions of $G=\mathfrak{Q}_8$ on a del Pezzo surface $X$ of degree 2 with nontrivial $\mathrm{Am}^3(X,G)$ \cite{TZcohomological}.
If we take $Z=V$, faithful $2$-dimensional representation, then $k(Z)^G$ is a $C_2$-field, so
$\mathrm{Am}^3$ has to vanish for the twist ${}^{T_Z}X$.
Another example, with nontrivial $\mathrm{Am}^n(X,G)$ and vanishing $\Am^n({}^{T_Z}X)$, for all $n\ge 3$, can be found in \cite[Sect.\ 9]{STZ}. 
\end{exam}

\bibliographystyle{alpha} 
\bibliography{coarse}

\end{document}